\newtheorem{theorem}{Theorem}[section]
\newtheorem{lemma}[theorem]{Lemma}
\newtheorem{proposition}[theorem]{Proposition}
\newtheorem{corollary}[theorem]{Corollary}
\theoremstyle{definition}
\newtheorem{example}[theorem]{Example}
\newtheorem{remark}[theorem]{Remark}
\newcommand{\SZ}{\mathbb{Z}}                    
\newcommand{\SQ}{\mathbb{Q}}                    
\newcommand{\SC}{\mathbb{C}}                    
\title{Rigid ideal sheaves and modular forms}
\author{\'Ad\'am Gyenge}
\address{Alfr\'ed R\'enyi Institute of Mathematics, Re\'altanoda utca 13-15, H-1053, Budapest, Hungary}
\email{Gyenge.Adam@renyi.hu}
\begin{document}
\maketitle


\begin{abstract} Let $X$ be a complex smooth quasi-projective surface acted upon by a finite group $G$ such that the quotient $X/G$ has singularities only of ADE type. We obtain an explicit expression for the generating series of the Euler characteristics of the zero-dimensional components in the moduli space of zero-dimensional subschemes on $X$ invariant under the action of $G$. We show that this generating series (up to a suitable rational power of the formal variable) is a holomorphic modular form.
\end{abstract}

\section{Introduction}
\label{sec:intro}


We investigate modular properties of a family of generating functions arising from certain enumerative invariants associated with complex smooth quasi-projective surface $X$ acted upon by a finite group $G$ such that the quotient $X/G$ has singularities only of ADE type. To put our results in context, denote by $\mathrm{Hilb}^m([X/G])$ the moduli space of $G$-invariant $0$-dimensional subschemes of $X$ of length $m$. This is the invariant part of the Hilbert scheme $\mathrm{Hilb}^m(X)$ of points\footnote{Another common notation for this object is
	$\mathrm{Hilb}^m(X)^G$. A justification for the two notations comes from the fact that a $G$ invariant, zero-dimensional subscheme of
	$X$ may equivalently be
	regarded as a zero-dimensional substack of $[X/G]$.} on $X$ under the lifted action of $G$. This Hilbert scheme is variously called the \textit{orbifold Hilbert scheme} \cite{young2010generating} or the \textit{equivariant Hilbert scheme} \cite{gusein2010generating}.

Let $\mathrm{RHilb}^m([X/G]) \subset \mathrm{Hilb}^m([X/G])$ be the \emph{rigid} part of the orbifold Hilbert scheme. This consists of those $G$-invariant ideal sheaves whose connected component in the orbifold Hilbert scheme is zero dimensional. Will will call this the \emph{rigid Hilbert scheme}\footnote{Similarly, the rigid Hilbert scheme can also be denoted as $\mathrm{RHilb}^m(X)^G$.} of the global quotient orbifold $[X/G]$.

We collect the topological Euler characteristics of these moduli spaces into generating functions. The \textit{$G$-fixed generating series} of $[X/G]$ is defined as
\[Z_{[X/G]}(q):= 1+\sum_{m=1}^{\infty}  \chi( \mathrm{Hilb}^n([X/G])) q^{m}.\]
The \textit{rigid $G$-fixed generating series} is defined as
\[R_{[X/G]}(q):= 1+\sum_{m=1}^{\infty}  \chi( \mathrm{RHilb}^n([X/G])) q^{m}.\]
In both series $q$ is a formal variable. 

\subsection{The local case} The simplest example of a pair $(X,G)$ as above is a Kleinian (or simple) surface singularity orbifold $[\mathbb{C}^2/G_{\Delta}]$ where $G_{\Delta}<SL(2,\SC)$ is a finite subgroup. This follows from the known fact that finite subgroups of $SL(2,\SC)$ as well as the quotients $\mathbb{C}^2/G_{\Delta}$ have an ADE classification; the index $\Delta$ refers to the corresponding root system. This setup will have a distinguished role in our treatment; we will call it the  local setting. Then we have the local $G_{\Delta}$-fixed (resp. local rigid $G_{\Delta}$-fixed) generating series
\[Z_{[\mathbb{C}^2/G_{\Delta}]}(q)= 1+\sum_{m=1}^{\infty}  \chi( \mathrm{Hilb}^n([\mathbb{C}^2/G_{\Delta}])) q^{m}\]
and
\[R_{[\mathbb{C}^2/G_{\Delta}]}(q)= 1+\sum_{m=1}^{\infty}  \chi( \mathrm{RHilb}^n([\mathbb{C}^2/G_{\Delta}])) q^{m}.\]

Let $\widetilde{\mathbb{C}^2/G_{\Delta}}$ be the minimal resolution of the Kleinian singularity $\mathbb{C}^2/G_{\Delta}$, which is known to be a smooth quasi-projective variety. Let $Z_{\widetilde{\mathbb{C}^2/G_{\Delta}}}(q)$ be the generating series of the Euler characteristics of the Hilbert scheme of points on $\widetilde{\mathbb{C}^2/G_{\Delta}}$ (see \cite{gottsche1990betti}).
Our first result might be known to experts, but we have not found it explicitly in the literature (however, see the closely related \cite[Lemma~5.2~(1)]{bryan2019g}).
\begin{theorem} 
\label{thm0}
Let $[\mathbb{C}^2/G_{\Delta}]$ be a Kleinian orbifold, and let $k=|G_{\Delta}|$, the order of $G_{\Delta}$. Then
\[ R_{[\mathbb{C}^2/G_{\Delta}]}(q)= \frac{Z_{[\mathbb{C}^2/G_{\Delta}]}(q)}{Z_{\widetilde{\mathbb{C}^2/G_{\Delta}}}(q^k)}.\]
\end{theorem}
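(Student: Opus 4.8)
The plan is to reduce the assertion to a local statement at the singular point and then to recognise it as the ``core--quotient'' factorisation familiar from the combinatorics of the McKay correspondence. Write $Y=\widetilde{\mathbb{C}^2/G_\Delta}$. First I would stratify $\mathrm{Hilb}^m([\mathbb{C}^2/G_\Delta])$ by the support of a subscheme in the coarse space $\mathbb{C}^2/G_\Delta$, whose points are the singular point $0$ (the image of the origin) and the free orbits. Over a free orbit the quotient stack is represented by a smooth point of a surface and $G_\Delta$-invariance forces the length to be a multiple of $k$, so a single free orbit contributes $\prod_{m\ge 1}(1-q^{km})^{-1}$. Applying the power-structure (motivic exponential) formula to this stratification gives
\[
Z_{[\mathbb{C}^2/G_\Delta]}(q)=Z_0(q)\cdot\Big(\prod_{m\ge 1}(1-q^{km})^{-1}\Big)^{\chi\big((\mathbb{C}^2/G_\Delta)\setminus\{0\}\big)},
\]
where $Z_0(q)=\sum_m\chi(\mathrm{Hilb}^m_0([\mathbb{C}^2/G_\Delta]))\,q^m$ collects the subschemes supported at $0$. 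Since $\mathbb{C}^2/G_\Delta$ is contractible, $\chi\big((\mathbb{C}^2/G_\Delta)\setminus\{0\}\big)=0$, so the free-orbit factor is trivial and $Z_{[\mathbb{C}^2/G_\Delta]}(q)=Z_0(q)$. Moreover any subscheme whose support meets a free orbit can be moved along the two-dimensional smooth locus, giving a positive-dimensional $G_\Delta$-invariant deformation; hence every rigid subscheme is supported at $0$, and $R_{[\mathbb{C}^2/G_\Delta]}(q)$ records exactly the zero-dimensional connected components of the punctual schemes. The theorem is thus equivalent to the purely local identity $Z_0(q)=R_{[\mathbb{C}^2/G_\Delta]}(q)\cdot Z_Y(q^k)$.

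Next I would stratify the punctual part by representation type. For $\xi$ supported at $0$ the $G_\Delta$-module $H^0(\mathcal{O}_\xi)$ has a well-defined isotypic decomposition, giving a dimension vector $\mathbf{v}\in\mathbb{Z}_{\ge 0}^{\,r+1}$ on the vertices of the affine McKay quiver, with orbifold length the linear function $\ell(\mathbf{v})=\sum_i v_i\dim V_i$. Because $G_\Delta$ is finite, this type is locally constant on the Hilbert scheme, so each type stratum is open and closed; under the ADHM/McKay description each stratum is a Nakajima quiver variety $\mathfrak{M}(\mathbf{v},e_0)$ framed at the affine vertex, which is smooth and connected. Consequently a subscheme is rigid precisely when its stratum is zero-dimensional, i.e. a single reduced point, so
\[
R_{[\mathbb{C}^2/G_\Delta]}(q)=\sum_{\mathbf{v}\,:\,\dim\mathfrak{M}(\mathbf{v},e_0)=0} q^{\ell(\mathbf{v})},
\qquad
Z_0(q)=\sum_{\mathbf{v}}\chi\big(\mathfrak{M}(\mathbf{v},e_0)\big)\,q^{\ell(\mathbf{v})}.
\]
Computing $\dim\mathfrak{M}(\mathbf{v},e_0)$ from the affine Cartan form shows that the zero-dimensional strata are exactly the extremal dimension vectors, which are indexed by the root lattice $Q_\Delta$; this identifies $R_{[\mathbb{C}^2/G_\Delta]}$ with a theta function and is the eventual source of its modularity.

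The crux is to upgrade the tautological splitting of $Z_0(q)$ into rigid and non-rigid parts into the \emph{product} $R_{[\mathbb{C}^2/G_\Delta]}(q)\cdot Z_Y(q^k)$. The model case is $G_\Delta$ cyclic (type $A$): here every monomial ideal is $G_\Delta$-invariant, so by torus localisation $Z_0(q)=\prod_{m\ge1}(1-q^m)^{-1}$, and the James--Kerber $k$-core/$k$-quotient bijection for partitions gives exactly
\[
\prod_{m\ge1}\frac{1}{1-q^m}=\Big(\sum_{k\text{-cores }\mu}q^{|\mu|}\Big)\cdot\prod_{m\ge1}\frac{1}{(1-q^{km})^{k}},
\]
in which the first factor is $R_{[\mathbb{C}^2/G_\Delta]}(q)$ (cores $=$ rigid/extremal vectors) and the second is $Z_Y(q^k)$, since $\chi(Y)=r+1=k$ and a $k$-quotient is a $k$-tuple of partitions weighted by $k$ times its size. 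In general type the same structure is provided by the Frenkel--Kac--Segal realisation of the basic representation of $\widehat{\mathfrak{g}}_\Delta$ as $\mathbb{C}[Q_\Delta]\otimes(\text{Fock space})$: under the orbifold-length grading the lattice factor $\mathbb{C}[Q_\Delta]$ produces the theta function $R_{[\mathbb{C}^2/G_\Delta]}(q)$, while the Heisenberg Fock factor produces $\prod_{m\ge1}(1-q^{km})^{-(r+1)}=Z_Y(q^k)$, the equality of the exponent with $\chi(Y)$ being Göttsche's formula together with Nakajima's Heisenberg action on $\bigoplus_n H^*(\mathrm{Hilb}^n(Y))$.

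I expect the main obstacle to be precisely this product factorisation away from type $A$. While the decomposition into rigid and movable strata is soft and geometric, proving that the movable directions organise into a universal $(r+1)$-coloured Fock space weighted by $k$ — the general-type analogue of the core/quotient bijection — requires the quiver-variety (or affine Lie algebra) machinery and careful tracking of the two gradings: namely that each ``movable quantum'' is a free orbit of orbifold length exactly $k$, and that the residual ``core'' weight matches the Weyl-vector-shifted quadratic form on $Q_\Delta$. Two subsidiary points also need care: justifying the power-structure formula for the orbifold Hilbert scheme stratified by support, and verifying that the zero-dimensional quiver varieties are reduced points, so that they contribute $1$ rather than a multiplicity to $R_{[\mathbb{C}^2/G_\Delta]}$. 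Should a self-contained argument be preferred, the type-$A$ case can be settled entirely by torus localisation and the core/quotient bijection, and the remaining ADE cases reduced to the known generating function for $Z_{[\mathbb{C}^2/G_\Delta]}$ together with the identification of its theta numerator as the rigid series.
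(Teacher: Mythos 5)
Your proposal is correct, and its mathematical core coincides with the paper's own proof: both stratify the orbifold Hilbert scheme by representation type, identify each stratum with a Nakajima quiver variety $M(\mathbf{v},\mathbf{w})$ with framing $\mathbf{w}=(1,0,\dotsc,0)$, compute its dimension via the affine Cartan form to conclude that the zero-dimensional strata are exactly those with $\mathbf{v}=\tfrac12(\mathbf{m}|\mathbf{m})\delta+(0,\mathbf{m})$ for $\mathbf{m}$ in the finite root lattice, and thereby identify $R_{[\mathbb{C}^2/G_\Delta]}$ with the lattice (theta) sum --- this is precisely Proposition~\ref{prop:RD}. Where you diverge is in how the product factorization of $Z_{[\mathbb{C}^2/G_\Delta]}$ is obtained: you present this as the crux, to be established via the Frenkel--Kac--Segal realisation (or the $k$-core/$k$-quotient bijection in type $A$), whereas the paper simply invokes the known multivariable formula of Theorem~\ref{thm:genfunct} (Nakajima, Gyenge--N\'emethi--Szendr\H{o}i), whose prefactor becomes $Z_{\widetilde{\mathbb{C}^2/G_\Delta}}(q^k)$ under the substitution $q_i=q^{\dim\rho_i}$ of Lemma~\ref{lem:thetasubst}, using G\"ottsche's formula and $\sum_i(\dim\rho_i)^2=k$. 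So the ``main obstacle'' you anticipate is not an obstacle at all: the Fock-space-times-lattice factorization is exactly the content of the cited result, and your fallback suggestion in the final sentence (cite the known generating function and identify its theta numerator as the rigid series) is literally the paper's argument. Two minor points: your preliminary reduction to punctual subschemes via the power structure is correct but unnecessary for the local statement (the paper uses this kind of support stratification only for the global Theorem~\ref{thm:glob}); and both arguments rely, as you rightly flag, on connectedness and non-emptiness of the relevant quiver varieties, so that a zero-dimensional stratum contributes exactly one point to $R_{[\mathbb{C}^2/G_\Delta]}$.
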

Therefore, the series $R_{[\mathbb{C}^2/G_{\Delta}]}(q)$ can be thought of as the function measuring the extra information appearing in the Hilbert scheme when one replaces the classical resolution $\widetilde{\mathbb{C}^2/G}$ with the stack resolution $[\mathbb{C}^2/G]$.

\subsection{Modularity}
We will investigate modular properties of $R_{[X/G]}(q)$. Consider first the local setting. It is worth to consider a slightly corrected version of the generating series introduced above. Let
\[Z_{\Delta}(q):=q^{-\frac{1}{24}}(Z_{[\SC^2/G_\Delta]}(q))\]
and
\[R_{\Delta}(q):=q^{\frac{(n+1)k-1}{24}}(R_{[\SC^2/G_\Delta]}(q))\]
so that
\[ R_{\Delta}(q) = Z_{\Delta}(q)\cdot \left(q^{\frac{(n+1)k}{24}}\left({Z_{\widetilde{\mathbb{C}^2/G_{\Delta}}}(q^k)}\right)^{-1}\right). \]
Throughout this paper we set
\[
q=e^{2\pi i \tau},
\]
and therefore we may regard $Z_{\Delta}$ and $R_{\Delta}(q)$ as a function of $\tau \in \mathbb{H}$ where $\mathbb{H}$ is the upper half-plane.



For $G<SL(2,\SC)$ an arbitrary finite subgroup, at least two explicit expressions are known for $Z_{\Delta}(\tau)$.
 First, in \cite[Theorem 1.3]{gyenge2018euler} we obtained an expression involving a sum over a rank $n$ lattice (see Theorem~\ref{thm:genfunct} below). 
 This expression can be rewritten as
 \[ Z_{\Delta}(\tau)=\frac{\theta_\Delta(\tau)}{\eta(k\tau)^{n+1}} \]
  where $\theta_\Delta(\tau)$ is a shifted theta function over the root lattice of $\Delta$ (see \cite[Section 4]{bryan2019g} for a detailed treatment and for an expression for $\theta_\Delta(\tau)$). 
Second, in \cite[Theorem 1.2]{bryan2019g} the same generating series was expressed as 
 \[ Z_{\Delta}(\tau)=\frac{\eta_\Delta(\tau)}{\eta(k\tau)^{n+1}}. \]
Here $\eta_\Delta(\tau)$ is a product of scaled Dedekind eta functions with (possibly negative) integer powers. Such expressions are generally called eta products \cite{kohler2011eta}.

Using the classical formula of Göttsche \cite{gottsche1990betti},
\[ q^{\frac{(n+1)k}{24}}\left({Z_{\widetilde{\mathbb{C}^2/G_{\Delta}}}(q^k)}\right)^{-1}=q^{\frac{(n+1)k}{24}}\left(\prod_{m=0}^{\infty}(1-q^{km})^{n+1}\right)^{-1}=\eta(k\tau)^{n+1}\]
where $\eta(\tau)$ is the Dedekind eta function.
Combining this with Theorem~\ref{thm0}, we have two expressions for $R_{\Delta}(\tau)$:
\[ R_{\Delta}(\tau)=\theta_\Delta(\tau),\quad \textrm{and}\quad R_{\Delta}(\tau)=\eta_\Delta(\tau).\] 
The equality of the two expressions on the right sides was already observed in \cite[Section 4]{bryan2019g}.


The function $Z_{\Delta}(\tau)$ is known to be a meromorphic modular form \cite{toda2015s, gyenge2018euler}. This fact can be interpreted as a counterpart of the S-duality conjecture of Vafa--Witten \cite{vafa1994strong} to ADE orbifolds. 
One can verify that $\eta(k\tau)^{n+1}$ appearing in the denominator of $Z_{\Delta}(\tau)$  is a holomorphic modular form (see~Section~\ref{sec:locetaprod}). It follows that $R_\Delta(\tau)$ is also a meromorphic modular form. Our second aim is to investigate in-depth its modular properties. Surprisingly, it turns out to be holomorphic in each ADE case. While the proof of Theorem~\ref{thm0} is based on the identity $R_{\Delta}(\tau)=\theta_\Delta(\tau)$, the modular properties of $R_\Delta(\tau)$ will be determined using the identity $R_{\Delta}(\tau)=\eta_\Delta(\tau)$. 
The relevant congruence subgroup will turn out to be
\[  \Gamma_0(N) = \left\{ \begin{pmatrix} a & b \\ c & d \end{pmatrix} \in SL(2,\SZ) : \begin{pmatrix} a & b \\ c & d \end{pmatrix} \equiv \begin{pmatrix} \ast & \ast \\ 0 & \ast \end{pmatrix} \;(\mathrm{mod}\;N) \right\}.\]
\begin{theorem}	
\label{thm:maineta}
 Let $\Delta$ be an ADE type root system of rank $n$ and let $k=|G_{\Delta}|$. The function $\eta_\Delta(\tau)$, and hence also $R_\Delta(\tau)$, is a holomorphic modular form of weight $n/2$ for $\Gamma_0(k)$ with the multiplier system $\chi_{\Delta}(A)$ given in Corollary~\ref{cor:multetaD} below. 
\end{theorem}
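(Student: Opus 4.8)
The plan is to treat $\eta_\Delta(\tau)$ directly as an eta quotient and invoke the classical transformation theory of eta quotients on $\Gamma_0(N)$ (Gordon--Hughes--Newman, Ligozat; see \cite{kohler2011eta}). Write
\[ \eta_\Delta(\tau)=\prod_{\delta\mid k}\eta(\delta\tau)^{r_\delta},\qquad r_\delta\in\SZ, \]
with the explicit exponents $r_\delta$ taken from \cite[Section~4]{bryan2019g}. Because each factor $\eta(\delta\tau)$ is holomorphic and nowhere vanishing on $\SH$, the product is holomorphic on $\SH$ regardless of the signs of the $r_\delta$; hence the only genuine holomorphicity question is at the cusps, and this is where the substance of the theorem lies.

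The first step is to confirm the transformation law and the weight. For an eta quotient of level $N=k$, modularity of weight $\tfrac12\sum_\delta r_\delta$ on $\Gamma_0(k)$ is controlled by the two congruence conditions
\[ \sum_{\delta\mid k}\delta\,r_\delta\equiv 0\pmod{24},\qquad \sum_{\delta\mid k}\frac{k}{\delta}\,r_\delta\equiv 0\pmod{24}; \]
since the exponent data is explicit and finite within each Dynkin type, verifying them is a finite computation. The weight statement reduces to the identity $\sum_{\delta\mid k}r_\delta=n$, which can be read off from the exponent table and is consistent with the relation $R_\Delta=Z_\Delta\cdot\eta(k\tau)^{n+1}$ given that $Z_\Delta$ has weight $-\tfrac12$ and $\eta(k\tau)^{n+1}$ has weight $\tfrac{n+1}{2}$.

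Granting the transformation law, the multiplier system $\chi_\Delta$ is then forced by the general eta-quotient formula: for $A=\begin{pmatrix} a & b \\ c & d\end{pmatrix}\in\Gamma_0(k)$ one has $\eta_\Delta(A\tau)=\chi_\Delta(A)(c\tau+d)^{n/2}\eta_\Delta(\tau)$, where $\chi_\Delta(A)$ is assembled from the Dedekind $\eta$-multiplier raised to the powers $r_\delta$. When $n$ is even this collapses to a Dirichlet character of the form $d\mapsto\left(\frac{(-1)^{n/2}\prod_{\delta\mid k}\delta^{r_\delta}}{d}\right)$, while for $n$ odd it is a genuine half-integral weight multiplier system; in either case one obtains precisely the expression recorded in Corollary~\ref{cor:multetaD}.

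The main obstacle, and the reason holomorphicity is a surprise, is the behaviour at the cusps. For this I would apply Ligozat's order formula: at the cusp $a/c$ with $\gcd(a,c)=1$ and $c\mid k$, the order of vanishing of $\eta_\Delta$ is a positive multiple of
\[ \sum_{\delta\mid k}\frac{\gcd(c,\delta)^2}{\delta}\,r_\delta, \]
so holomorphicity at all cusps is equivalent to the nonnegativity of this quantity for every $c\mid k$. Since the $r_\delta$ are of mixed sign in general, this nonnegativity is not formal; I would verify it by evaluating the sum at each divisor $c\mid k$ in every ADE type. It is this finite but nontrivial family of inequalities that distinguishes $R_\Delta$ as a genuinely \emph{holomorphic} modular form rather than merely the meromorphic form one might expect from writing it as the quotient $Z_\Delta/Z_{\widetilde{\mathbb{C}^2/G_\Delta}}(q^k)$. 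Once every cusp order is shown to be $\geq 0$, the transformation law, the computed weight $n/2$, and the multiplier $\chi_\Delta$ together yield that $\eta_\Delta$, and hence $R_\Delta=\eta_\Delta$, is a holomorphic modular form on $\Gamma_0(k)$, completing the proof.
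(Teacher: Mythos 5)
Your overall skeleton --- write $\eta_\Delta$ as an eta quotient with exponents taken from \cite{bryan2019g}, get the weight as $\tfrac12\sum_\delta r_\delta=n/2$, read off the multiplier from the Dedekind eta multiplier, and reduce holomorphicity to nonnegativity of the cusp-order sums $\sum_{\delta\mid k}\frac{\gcd(c,\delta)^2}{\delta}r_\delta$ over divisors $c\mid k$ --- is exactly the paper's strategy (Lemma~\ref{lem:cusporder} together with Lemmas~\ref{lem:an}--\ref{lem:e8}, and Corollaries~\ref{cor:fmultexp} and \ref{cor:multetaD}). However, your first step contains a genuine error. The congruence conditions
\[ \sum_{\delta\mid k}\delta\,r_\delta\equiv 0\pmod{24},\qquad \sum_{\delta\mid k}\frac{k}{\delta}\,r_\delta\equiv 0\pmod{24} \]
are the Gordon--Hughes--Newman/Ligozat criteria for an eta quotient to be modular with a \emph{Dirichlet character} (i.e.\ trivial multiplier up to a character), and they simply fail for $\eta_\Delta$: already for $A_1$ one has $\eta_{A_1}(\tau)=\eta(2\tau)^2/\eta(\tau)$, so $\sum_\delta \delta r_\delta=3$, and in general $\sum_\delta \delta r_\delta=(n+1)k-1$, which is almost never divisible by $24$ --- this is precisely why $R_\Delta$ carries the fractional shift $q^{((n+1)k-1)/24}$. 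Equivalently, $\mathrm{ord}(\eta_\Delta,\infty)=((n+1)k-1)/24$ is fractional, so $\eta_\Delta(\tau+1)\neq\eta_\Delta(\tau)$ and the multiplier cannot descend to a character; your claim that for even $n$ it ``collapses to a Dirichlet character'' fails for the same reason (e.g.\ $E_6$ has order $167/24$ at $\infty$). Carried out literally, the verification you propose would come out negative and the proof would stall at that point.

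The repair is what the paper does: drop the congruence conditions entirely. Every eta quotient transforms as a form of weight $\tfrac12\sum_\delta r_\delta$ on $\Gamma_0(N)$ with \emph{some} multiplier system, assembled from Petersson's formula \eqref{eq:veta}; this is the framework of \cite{kohler2011eta} used in Corollary~\ref{cor:fmultexp}, and the explicit $\chi_\Delta$ of Corollary~\ref{cor:multetaD} then follows from the three identities $\sum_\delta r_\delta=n$, $\sum_\delta r_\delta/\delta=0$ and $\sum_\delta \delta r_\delta=(n+1)k-1$, with no divisibility by $24$ required. Your cusp-order step is correct and is where the real content lies, but note one more point: for types $A_n$ and $D_n$ the root systems form infinite families, so ``evaluating the sum at each divisor in every ADE type'' cannot be a finite enumeration; one needs the uniform-in-$n$ inequalities the paper uses, such as $(c,4n-8)^2\geq (c,2n-4)^2$, $(c,4n-8)^2\geq (c,4)^2$ and $(c,2)^2\geq 1$, to conclude nonnegativity for all $n$ at once.
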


\subsection{The global case}
Using the motivic property of Hilbert schemes we can globalize the results above. Let now $X$ be an arbitrary smooth quasi-projective surface acted upon by a finite group $G$ such that the quotient $X/G$ has only ADE singularities. Let $p_1,\dots,p_r \in X/G$ be the collection of singular points of the quotient. These determine a collection of ADE type root systems $\Delta_{1},\dots,\Delta_{r}$. Let $k=|G|$, $k_i=|G_{\Delta_i}|$ and $n_i$ be the rank of $\Delta_{i}$. 
\begin{theorem} 
\label{thm:glob}	
With the above notation we have
\[ R_{[X/G]}(q)=\prod_{i=1}^r R_{[\mathbb{C}^2/G_{\Delta_i}]}(q^{k/k_i}).\]
\end{theorem}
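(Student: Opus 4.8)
The plan is to reduce the global identity to the local situation by stratifying $\mathrm{RHilb}^m([X/G])$ according to the location of the support in $X/G$, and then to invoke the motivic property of the Euler characteristic (additivity over locally closed stratifications, multiplicativity over products). Since the rigid Hilbert scheme consists of the zero-dimensional connected components of $\mathrm{Hilb}^m([X/G])$, its Euler characteristic is simply the number of rigid $G$-invariant length-$m$ subschemes, so the whole argument amounts to producing a length-graded bijection between global rigid subschemes and tuples of local ones.

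First I would locate the rigid locus. Because $X$ is smooth and $G$ is finite, the coarse quotient $X/G$ is smooth away from $\{p_1,\dots,p_r\}$; over any such point the local Hilbert scheme of $[X/G]$ is that of a smooth surface, so a $G$-invariant subscheme with support there deforms in a positive-dimensional family and cannot be rigid. Hence every point of $\mathrm{RHilb}^m([X/G])$ is supported set-theoretically over the singular points. Writing a rigid $Z$ as a disjoint union $Z=\bigsqcup_i Z_i$ with $Z_i$ supported over $p_i$, I would use the standard fact that subschemes with disjoint supports deform independently: the connected component of $Z$ in the Hilbert scheme is the product of the connected components of the $Z_i$. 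Consequently $Z$ is rigid if and only if each $Z_i$ is rigid.

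Next I would set up the local comparison and the length bookkeeping. Fix $x_i\in X$ over $p_i$; by the ADE hypothesis its stabilizer $G_{x_i}$ is conjugate to $G_{\Delta_i}<SL(2,\SC)$ of order $k_i$, so the orbit $G\cdot x_i$ has $k/k_i$ points. An equivariant analytic (or formal étale-slice) local model identifies a neighborhood of $x_i$ with $(\SC^2,G_{\Delta_i})$, and restriction to this slice identifies the part of $\mathrm{RHilb}([X/G])$ supported over $p_i$ with the local rigid Hilbert scheme $\mathrm{RHilb}([\SC^2/G_{\Delta_i}])$ of Theorem~\ref{thm0}: a $G$-invariant $Z_i$ is determined by its restriction to $x_i$ (the remaining orbit points carry the $G$-translates), which is a $G_{\Delta_i}$-invariant subscheme of $\SC^2$. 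This identification multiplies length by the orbit size, for if the restriction has length $\ell_i$ on $\SC^2$ then $Z_i$ has length $\ell_i\cdot(k/k_i)$ on $X$; this is precisely the source of the substitution $q\mapsto q^{k/k_i}$. Assembling the two previous steps gives a decomposition
\[ \mathrm{RHilb}^m([X/G]) \;=\; \bigsqcup_{\sum_i \ell_i\, k/k_i = m}\ \prod_{i=1}^r \mathrm{RHilb}^{\ell_i}([\SC^2/G_{\Delta_i}]), \]
and applying $\chi$ together with its additivity and multiplicativity yields
\[ \chi(\mathrm{RHilb}^m([X/G])) = \sum_{\sum_i \ell_i\, k/k_i = m}\ \prod_{i=1}^r \chi(\mathrm{RHilb}^{\ell_i}([\SC^2/G_{\Delta_i}])). \]
Multiplying by $q^m$ and summing over $m$ factorizes the right-hand side as $\prod_{i=1}^r R_{[\SC^2/G_{\Delta_i}]}(q^{k/k_i})$, which is the claim.

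The hard part will be the geometric input of the first two steps rather than the concluding bookkeeping: rigorously justifying that rigidity is detected locally (the product decomposition of the deformation space at disjoint supports, and the positive-dimensionality of any family supported over the smooth locus), and that the equivariant local model identifies not merely the underlying point sets but the rigid loci themselves, with the correct multiplication of lengths by the orbit size $k/k_i$. Both are consequences of the \'etale-local structure of Hilbert schemes of points on surfaces, but they carry the real content of the argument.
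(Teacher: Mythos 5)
Your proposal is correct and takes essentially the same approach as the paper: the paper likewise stratifies $\mathrm{RHilb}([X/G])$ by orbit type of the support, observes that a rigid $G$-invariant subscheme supported on free orbits corresponds to a rigid subscheme of the smooth surface $(X/G)^{o}$ and hence must be empty, and identifies the piece supported over each $p_i$ with $\mathrm{RHilb}([\SC^2/G_{\Delta_i}])$ via a $G_i$-equivariant formal-neighborhood isomorphism, with lengths scaled by the orbit size $k/k_i$. The conclusion then follows, exactly as you describe, from additivity of $\chi$ under stratifications and multiplicativity under products.
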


As \[R_{\Delta_i}(\tau)=q^{\frac{(n_i+1)k_i-1}{24}}R_{[\mathbb{C}^2/G_{\Delta_i}]}(q)\] is a holomorphic modular form for $\Gamma_0(k_i)$, it follows from  Lemma~\ref{lem:higherlevel} below that \[R_{\Delta_i}\left(\frac{k}{k_i}\tau\right)=q^{\frac{k((n_i+1)k_i-1)}{24k_i}}R_{[\mathbb{C}^2/G_{\Delta_i}]}(q^{k/k_i})\] is a holomorphic modular form for \[\Gamma_0\left(k_i\cdot\frac{k}{k_i}\right)=\Gamma_0(k).\]
Since the product of modular forms for $\Gamma_0(k)$ is a modular form for $\Gamma_0(k)$ \cite[p. 17]{diamond2005first}, we obtain the following global modularity result.
\begin{corollary} 
\label{cor:globmod}	
With the above notation the function
\[\begin{aligned}q^{\sum_{i=1}^r\frac{k((n_i+1)k_i-1)}{24k_i}}\cdot R_{[X/G]}(q) & =\prod_{i=1}^r q^{\frac{k((n_i+1)k_i-1)}{24k_i}}R_{[\mathbb{C}^2/G_{\Delta_i}]}(q^{k/k_i})\\ & =\prod_{i=1}^r R_{\Delta_i}\left(\frac{k}{k_i}\tau\right)
\end{aligned}\]
is a holomorphic modular form for $\Gamma_0(k)$ of weight $\frac{1}{2}\sum_{i=1}^r n_i$.
\end{corollary}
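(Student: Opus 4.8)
The plan is to assemble the corollary from three ingredients already in hand: the product factorization of Theorem~\ref{thm:glob}, the local holomorphic modularity of Theorem~\ref{thm:maineta}, and the level-raising Lemma~\ref{lem:higherlevel}. First I would start from Theorem~\ref{thm:glob}, which gives $R_{[X/G]}(q)=\prod_{i=1}^r R_{[\mathbb{C}^2/G_{\Delta_i}]}(q^{k/k_i})$, and multiply both sides by the normalizing factor $q^{\sum_{i=1}^r \frac{k((n_i+1)k_i-1)}{24 k_i}}$. Distributing this power of $q$ across the product and observing that, under $\tau\mapsto \frac{k}{k_i}\tau$ (equivalently $q\mapsto q^{k/k_i}$), the definition $R_{\Delta_i}(\tau)=q^{\frac{(n_i+1)k_i-1}{24}}R_{[\mathbb{C}^2/G_{\Delta_i}]}(q)$ becomes $R_{\Delta_i}(\frac{k}{k_i}\tau)=q^{\frac{k((n_i+1)k_i-1)}{24 k_i}}R_{[\mathbb{C}^2/G_{\Delta_i}]}(q^{k/k_i})$, rewrites the left-hand side as $\prod_{i=1}^r R_{\Delta_i}(\frac{k}{k_i}\tau)$. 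This is exactly the chain of identities displayed in the statement, so it remains only to check that the right-hand side is a holomorphic modular form of the asserted weight and level.

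For each factor I would apply Theorem~\ref{thm:maineta}, by which $R_{\Delta_i}(\tau)$ is a holomorphic modular form of weight $n_i/2$ for $\Gamma_0(k_i)$ with multiplier system $\chi_{\Delta_i}$. Before rescaling, I need that $\frac{k}{k_i}$ is a positive integer, i.e. that $k_i=|G_{\Delta_i}|$ divides $k=|G|$; this holds because near the singular point $p_i$ the quotient $X/G$ is locally modeled on $\mathbb{C}^2/G_{\Delta_i}$, so $G_{\Delta_i}$ arises as (a conjugate of) a stabilizer subgroup of $G$ and Lagrange's theorem yields $k_i\mid k$. Granted this, Lemma~\ref{lem:higherlevel} applies to the substitution $\tau\mapsto \frac{k}{k_i}\tau$ and shows that $R_{\Delta_i}(\frac{k}{k_i}\tau)$ is again holomorphic on $\mathbb{H}$ and at every cusp, of the same weight $n_i/2$, now for $\Gamma_0(k_i\cdot \frac{k}{k_i})=\Gamma_0(k)$, with a correspondingly rescaled multiplier.

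Finally I would take the product over $i$. Since all $r$ factors are now holomorphic modular forms for the single group $\Gamma_0(k)$, their product is a holomorphic modular form for $\Gamma_0(k)$ \cite[p.~17]{diamond2005first}: holomorphy on $\mathbb{H}$ and at the cusps is preserved under multiplication, the weights add to $\frac{1}{2}\sum_{i=1}^r n_i$, and the (rescaled) multiplier systems multiply to $\prod_{i=1}^r \chi_{\Delta_i}$. This produces precisely the claimed form.

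The main obstacle I anticipate is bookkeeping rather than conceptual. One must confirm that Lemma~\ref{lem:higherlevel} genuinely preserves holomorphy at the cusps of the enlarged level $\Gamma_0(k)$: rescaling $\tau$ refines and permutes the cusps, and a form holomorphic at the cusps of $\Gamma_0(k_i)$ must be checked to stay holomorphic at \emph{all} cusps of $\Gamma_0(k)$. One must also verify that the product of the rescaled multiplier systems is itself a consistent multiplier system for weight $\frac{1}{2}\sum_i n_i$. The divisibility $k_i\mid k$ is the other point that deserves an explicit justification from the geometry of the $G$-action rather than being taken for granted.
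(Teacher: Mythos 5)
Your proposal is correct and follows essentially the same route as the paper: it combines Theorem~\ref{thm:glob}, the local modularity of $R_{\Delta_i}(\tau)$ from Theorem~\ref{thm:maineta}, the level-raising Lemma~\ref{lem:higherlevel} applied with $L=k/k_i$ to land in $\Gamma_0(k)$, and the closure of modular forms for $\Gamma_0(k)$ under products with weights adding to $\frac{1}{2}\sum_i n_i$. Your explicit justification that $k_i\mid k$ (the $G_{\Delta_i}$ are stabilizer subgroups of $G$, so Lagrange applies) is a point the paper leaves implicit, but it matches the paper's setup where the $G_i\subset G$ are defined as stabilizers.
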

It is worth to compare this with \cite[Theorem 1.1]{bryan2019g}, which shows that $q(Z_{[X/G]}(q))^{-1}$ is a holomorphic modular form provided that $X$ is a K3 surface (a similar statement for abelian surfaces was obtained recently in \cite{pietromonaco2020g}). Note however that in Corollary~\ref{cor:globmod}	we do not take reciprocal.

\subsection{Further remarks and the structure of the paper}
As the moduli of rigid ideal sheaves consists of isolated points, the Euler numbers $\chi(\mathrm{RHilb}^n([X/G]))$ count actually the points in $\mathrm{RHilb}^n([X/G])$. Hence, all rigid generating series also enumerate the appropriate classes (multiples of the point) in the Grothendieck ring of varieties (over $\SC$).


The structure of the paper is the following. In Section 2, we analyze the local and global rigid generating series. In particular, we prove Theorems~\ref{thm0} and \ref{thm:glob}. In Section 3, after reviewing the basics of eta products, we prove Theorem~\ref{thm:maineta}. In the Appendix the orders of $R_{\Delta}(\tau)$ are collected when $\Delta$ is of type E.

\subsection*{Acknowledgement} The author would like to thank to Jim Bryan for helpful comments and discussions.
\begin{center}
	\begin{tabular}{p{1.5cm} p{11.5cm}}
		 \vspace{-0.2cm} \resizebox{1.5cm}{!}{\Huge  \euflag} 
		& 
		This project has received funding from the European Union's Horizon 2020
		research and innovation programme under the Marie Sk\l odowska-Curie grant
		agreement No 891437.
	\end{tabular}
\end{center}

\section{Rigid ideal sheaves}

\subsection{Local calculations: the proof of Theorem~\ref{thm0}}
\label{subsec:loccalc}
Let $G_{\Delta} < SL(2,\SC)$ be a finite subgroup such that $\Delta$ is a root system of rank $n$. Let $k=|G_{\Delta}|$, the order of $G_{\Delta}$.

It is known that $\mathrm{Hilb}([\mathbb{C}^2/G_{\Delta}]) $ decomposes into disjoint subvarieties
\begin{equation*} 
\mathrm{Hilb}([\mathbb{C}^2/G_{\Delta}])=\bigsqcup_{\rho \in {\mathop{\rm Rep}}({G_{\Delta}})}\mathrm{Hilb}^{\rho}([\mathbb{C}^2/G_{\Delta}]),\end{equation*}
where
\[\mathrm{Hilb}^{\rho}([\mathbb{C}^2/G_{\Delta}])=  \{ I \in \mathrm{Hilb}([\mathbb{C}^2/G_{\Delta}]) \colon H^0(\mathcal{O}_{\SC^2}/I) \simeq_{G_{\Delta}} \rho \}\]
for any finite-dimensional representation $\rho\in {\mathop{\rm Rep}}(G_{\Delta})$ of $G_{\Delta}$ (see \cite{gyenge2018euler}).
Denote 
\[ \mathrm{RHilb}^{\rho}([\mathbb{C}^2/G_{\Delta}]):=\mathrm{RHilb}([\mathbb{C}^2/G_{\Delta}])\cap\mathrm{Hilb}^{\rho}([\mathbb{C}^2/G_{\Delta}]). \]

There correspond multivariable generating series to these varieties:
\[Z_{[\SC^2/G_{\Delta}]}(q_0,\ldots, q_n):= \sum_{m_0,\dots,m_n=0}^\infty \chi\left(\mathrm{Hilb}^{m_0 \rho_0 + \ldots +m_n \rho_n}([\SC^2/G_{\Delta}]) \right)   q_0^{m_0}\cdot \ldots \cdot q_n^{m_n},\]
\[R_{[\SC^2/G_{\Delta}]}(q_0,\ldots, q_n):= \sum_{m_0,\dots,m_n=0}^\infty \chi\left(\mathrm{RHilb}^{m_0 \rho_0 + \ldots +m_n \rho_n}([\SC^2/G_{\Delta}]) \right)   q_0^{m_0}\cdot \ldots \cdot q_n^{m_n}\]
where $\{\rho_{0},\dotsc ,\rho_{n} \}$ are the irreducible
representations of $G_{\Delta}$ with $\rho_{0}$ the trivial
representation. We note that $n$ is also the rank of $\Delta$.

One recovers the one variable (rigid or non-rigid) $G_{\Delta}$-fixed generating series of from the above multivariable generating series as follows.

\begin{lemma} 
	\label{lem:thetasubst}
\begin{enumerate}
\item 	\[Z_{[\SC^2/G_{\Delta}]}(q)=Z_{[\SC^2/G_{\Delta}]}(q_0,\ldots, q_n)\Big|_{q_i=q^{\mathrm{dim} \rho_i}}.\]
\item    \[R_{[\SC^2/G_{\Delta}]}(q)=R_{[\SC^2/G_{\Delta}]}(q_0,\ldots, q_n)\Big|_{q_i=q^{\mathrm{dim} \rho_i}}.\]
\end{enumerate}
\end{lemma}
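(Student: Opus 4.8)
The plan is to reduce both identities to the additivity of the Euler characteristic over the disjoint-union decomposition of the invariant Hilbert scheme into isotypic strata, combined with the elementary observation that the length of a $G_{\Delta}$-invariant subscheme equals the dimension of its associated representation. Since part (2) is obtained from part (1) simply by intersecting every stratum with the rigid locus $\mathrm{RHilb}([\SC^2/G_{\Delta}])$, I would carry out the argument for (1) and remark that it transfers verbatim to (2).

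First I would record the key numerical relation. For a $G_{\Delta}$-invariant ideal $I \in \mathrm{Hilb}^{\rho}([\SC^2/G_{\Delta}])$, the length of the corresponding $0$-dimensional subscheme is $\dim_{\SC} H^0(\CO_{\SC^2}/I) = \dim \rho$. Writing $\rho = m_0 \rho_0 + \dots + m_n \rho_n$ in terms of the irreducible representations, this length equals $\sum_{i=0}^n m_i \dim \rho_i$. Consequently the length-$m$ invariant Hilbert scheme decomposes as the disjoint union
\[ \mathrm{Hilb}^m([\SC^2/G_{\Delta}]) = \bigsqcup_{\substack{m_0,\dots,m_n \geq 0 \\ \sum_i m_i \dim \rho_i = m}} \mathrm{Hilb}^{m_0 \rho_0 + \dots + m_n \rho_n}([\SC^2/G_{\Delta}]), \]
refining the stratification by $\rho$ already quoted in the excerpt; intersecting with the rigid locus yields the analogous decomposition for $\mathrm{RHilb}^m$. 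Because each $\dim \rho_i \geq 1$, only finitely many tuples $(m_0,\dots,m_n)$ satisfy the constraint, so each of these decompositions is finite.

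Next I would apply additivity of $\chi$ over the finite disjoint union to obtain
\[ \chi\big(\mathrm{Hilb}^m([\SC^2/G_{\Delta}])\big) = \sum_{\substack{m_0,\dots,m_n \geq 0 \\ \sum_i m_i \dim \rho_i = m}} \chi\big(\mathrm{Hilb}^{m_0 \rho_0 + \dots + m_n \rho_n}([\SC^2/G_{\Delta}])\big). \]
Finally I would perform the substitution $q_i = q^{\dim \rho_i}$ in the multivariable series and collect powers of $q$: the monomial $q_0^{m_0}\cdots q_n^{m_n}$ becomes $q^{\sum_i m_i \dim \rho_i}$, so the coefficient of $q^m$ in $Z_{[\SC^2/G_{\Delta}]}(q_0,\dots,q_n)\big|_{q_i = q^{\dim \rho_i}}$ is precisely the right-hand side above, namely $\chi(\mathrm{Hilb}^m)$. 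This matches the definition of the one-variable series, with the $m=0$ term corresponding to the empty subscheme and contributing $1$ (since $\dim \rho_0 = 1$), establishing (1); the identical computation with $\mathrm{RHilb}$ in place of $\mathrm{Hilb}$ gives (2).

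The argument involves no genuine obstacle, as it is a bookkeeping identity. The only point requiring care is the correct matching of the length grading with the representation-theoretic grading, i.e.\ verifying that the stratification index $\rho$ refines the length $m$ exactly through $m = \dim \rho$; once this is in place, additivity of $\chi$ and reindexing of the power series complete the proof.
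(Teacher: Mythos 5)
Your proof is correct and takes essentially the same approach as the paper: the key point in both is that a $G_{\Delta}$-invariant ideal with isotype $\rho = m_0\rho_0 + \dots + m_n\rho_n$ has length $\sum_i m_i \dim\rho_i$, so the substitution $q_i = q^{\dim\rho_i}$ converts the representation grading into the length grading. The paper simply records this numerical relation and leaves implicit the finite-disjoint-union and additivity-of-$\chi$ bookkeeping that you spell out.
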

\begin{proof}
	Let $I$ be an equivariant ideal such that $H^0(\mathcal{O}_{\SC^2}/I) \simeq_{G_{\Delta}} \rho$, where $\rho \simeq m_0 \rho_0 + \ldots +m_n \rho_n$. This implies that
	\[\mathrm{dim} H^0(\mathcal{O}_{\SC^2}/I) = \sum_{i=0}^{n} m_i \mathrm{dim} \rho_i, \]
	when $I$ is considered as a non-equivariant ideal of $\SC^2$.
\end{proof}

The orbifold generating series of a simple singularity orbifold is given explicitly by the following result.
\begin{theorem}[\cite{nakajima2002geometric, gyenge2018euler}]  
	\label{thm:genfunct}
	Let $[\SC^2/G_\Delta]$ be a Kleinian orbifold. Then 
	\begin{multline*} Z_{[\SC^2/G_\Delta]}(q_0,\dots,q_n)=\left(\prod_{m=1}^{\infty}(1-\mathbf{q}^m)^{-1}\right)^{n+1} \\
		\cdot\sum_{ \mathbf{m}=(m_1,\dots,m_n) \in \SZ^n } q_1^{m_1}\dots q_n^{m_n}(\mathbf{q}^{1/2})^{\mathbf{m}^\top \cdot C_\Delta \cdot \mathbf{m}},\label{eq:orbi_main_formula}\end{multline*}
	where $\mathbf{q}=\prod_{i=0}^n q_i^{\dim\rho_i}$ and $C_\Delta$ is the finite type Cartan matrix corresponding to $\Delta$.
\end{theorem}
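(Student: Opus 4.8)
The plan is to reduce the computation of the Euler characteristics $\chi(\mathrm{Hilb}^{m_0\rho_0+\dots+m_n\rho_n}([\SC^2/G_\Delta]))$ to a count of isolated fixed points under a torus action, and then to recognize the resulting combinatorial generating function as the product of the theta sum over the root lattice with the $(n+1)$-st power of $\prod_{m\ge 1}(1-\mathbf{q}^m)^{-1}$. The starting point is the general principle that for a variety $M$ carrying an algebraic torus action the topological Euler characteristic equals that of the fixed locus; combined with a commuting finite group action this gives $\chi(M^{G_\Delta})=\chi((M^{T})^{G_\Delta})$, so it suffices to enumerate the $T$-fixed points of $\mathrm{Hilb}^m(\SC^2)$ that are $G_\Delta$-invariant and to record the representation $\CO_{\SC^2}/I$ each of them carries.

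First I would treat the abelian (type $A_n$) case, where $G_\Delta$ is cyclic and embeds into the diagonal torus $T=(\SC^*)^2$ acting by $(x,y)\mapsto(sx,ty)$. Here the $T$-fixed points of $\mathrm{Hilb}^m(\SC^2)$ are exactly the monomial ideals $I_\lambda$ indexed by partitions $\lambda$, and each such ideal is automatically $G_\Delta$-invariant; the class of $\CO_{\SC^2}/I_\lambda$ in $\mathrm{Rep}(G_\Delta)$ is read off by summing the $G_\Delta$-weights of the boxes of $\lambda$. The multivariable generating series thereby becomes an explicit sum over partitions weighted by box-contents, and this content-weighting is precisely the data that the Maya-diagram (abacus) bijection, i.e.\ the boson--fermion correspondence of Frenkel--Kac, converts into a sum over the root lattice $\SZ^n$ with quadratic form $\mathbf{m}^\top C_\Delta\,\mathbf{m}$, multiplied by $n+1$ free Heisenberg factors $\prod_{m\ge 1}(1-\mathbf{q}^m)^{-1}$. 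This yields the claimed identity in type $A$.

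The main obstacle is the non-abelian case (types $D$ and $E$), where $G_\Delta$ acts irreducibly on $\SC^2$, so by Schur's lemma its centralizer in $GL(2,\SC)$ is only the scalars and no torus with isolated fixed points commutes with $G_\Delta$; the naive monomial-ideal localization is unavailable. I would circumvent this by replacing $\mathrm{Hilb}([\SC^2/G_\Delta])$ with its description as a Nakajima quiver variety for the affine McKay quiver of $G_\Delta$, where a natural torus does have isolated fixed points, enumerated by combinatorial data distributed over the $n+1$ vertices of the affine Dynkin diagram. Equivalently, and most cleanly, one invokes Nakajima's geometric construction to identify $\bigoplus_\rho H^*(\mathrm{Hilb}^\rho([\SC^2/G_\Delta]))$ with the basic representation $V(\Lambda_0)$ of the affine Kac--Moody algebra $\widehat{\mathfrak{g}}(\Delta)$, the grading by $\rho$ matching the weight decomposition. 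The sought generating series is then the graded character of $V(\Lambda_0)$, and the Frenkel--Kac vertex-operator realisation of $V(\Lambda_0)$ on the lattice vertex algebra of the root lattice expresses this character exactly as the stated theta function divided by the $(n+1)$-st power of the Euler product.

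I expect the genuinely hard step to be this last identification: proving that the geometric $\rho$-grading coincides with the affine weight grading and that the character identity holds uniformly across all ADE types. The torus localization and the passage to fixed-point combinatorics are, by contrast, routine once the correct commuting torus (or quiver model) has been set up; the real content of the theorem lies in matching the enumerative side with the root-lattice theta series, which is governed by the Frenkel--Kac construction.
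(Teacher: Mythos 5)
First, a point of reference: the paper does not prove Theorem~\ref{thm:genfunct} at all --- it is imported verbatim from \cite{nakajima2002geometric, gyenge2018euler} --- so your proposal can only be measured against the arguments in those references. Your outline does follow the same route they take: in type $A$, torus localization to monomial ideals and the core/quotient (abacus) combinatorics, which correctly produces the root-lattice theta sum times $\prod_{m\ge 1}(1-\mathbf{q}^m)^{-(n+1)}$; in types $D$ and $E$, passage to Nakajima quiver varieties and affine Lie algebra representation theory. That part of your plan, including the observation that no torus with isolated fixed points commutes with a non-abelian $G_\Delta$, is sound.

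However, the step you yourself flag as the crux contains a genuine gap. You propose to identify $\bigoplus_\rho H^*(\mathrm{Hilb}^\rho([\SC^2/G_\Delta]))$ with the basic representation $V(\Lambda_0)$ of $\widehat{\mathfrak{g}}(\Delta)$ and to read off the generating series as its Frenkel--Kac character. Both halves of this claim are off, by the same factor, and as stated the argument does not close. Nakajima's theorem identifies $V(\Lambda_0)$ with the direct sum of the \emph{middle} (top-degree) homology groups of the quiver varieties $M(\mathbf{v},\mathbf{w})$, $\mathbf{w}=(1,0,\dotsc,0)$, with the $\rho$-grading matching the weight grading; but the Euler characteristic $\chi(\mathrm{Hilb}^\rho)$ is the \emph{total} Betti number sum, which in general strictly exceeds the top Betti number. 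Correspondingly, the Frenkel--Kac realization $V(\Lambda_0)\cong S\bigl(\mathfrak{h}\otimes t^{-1}\SC[t^{-1}]\bigr)\otimes\SC[Q]$ gives
\[
\operatorname{ch} V(\Lambda_0)=\frac{\sum_{\alpha\in Q}e^{\alpha}\,q^{\frac{1}{2}(\alpha|\alpha)}}{\prod_{m\ge 1}(1-q^m)^{\,n}},
\]
with the $n$-th power of the Euler product in the denominator (one Heisenberg boson per finite Cartan direction), not the $(n+1)$-st power that the theorem requires. The missing ingredient is that the full homology $\bigoplus_{\mathbf{v}}H_*(M(\mathbf{v},\mathbf{w}))$ is a module over $\widehat{\mathfrak{g}}$ together with an additional rank-one Heisenberg algebra, and is isomorphic to $V(\Lambda_0)\otimes\mathcal{F}$ with $\mathcal{F}$ a rank-one Fock space contributing the extra factor $\prod_{m\ge 1}(1-q^m)^{-1}$; in type $A$ this is precisely the familiar fact that the fermionic Fock space is the basic representation of $\widehat{\mathfrak{gl}}(n+1)$ rather than of $\widehat{\mathfrak{sl}}(n+1)$, which is why your type $A$ combinatorics produced the correct power $n+1$ while your proposed uniform argument produces $n$. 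With that corrected identification --- which is the form of the result actually used in the cited references --- your outline does yield the theorem; without it, the character identity you invoke is false.
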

Göttsche's formula \cite{gottsche1990betti} applied on the resolution $\widetilde{\mathbb{C}^2/G_{\Delta}}$ shows that when we substitute $q_i=q^{\mathrm{dim} \rho_i}$ for each $0 \leq i \leq n$, the first term of this expression gives precisely $Z_{\widetilde{\mathbb{C}^2/G_{\Delta}}}(q^k)$
due to the fact that 
\[\sum_{i=0}^n (\mathrm{dim} \rho_i)^2=|G_{\Delta}|=k.\]
It turns out from the next result that the second term in Theorem~\ref{thm:genfunct} 
gives exactly the rigid generating series. From this and the previous observation we obtain Theorem~\ref{thm0}.
\begin{proposition}[{Compare with \cite[Lemma~5.2~(1)]{bryan2019g}}]
\label{prop:RD}
\[ R_{[\SC^2/G_\Delta]}(q_0,\dots,q_n)=\sum_{ \mathbf{m}=(m_1,\dots,m_n) \in \SZ^n } q_1^{m_1}\dots q_n^{m_n}(\mathbf{q}^{1/2})^{\mathbf{m}^\top \cdot C_\Delta \cdot \mathbf{m}}\]
\end{proposition}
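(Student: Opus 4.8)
The plan is to read the two factors of Theorem~\ref{thm:genfunct} geometrically, via the identification of each piece $\mathrm{Hilb}^{\rho}([\SC^2/G_\Delta])$ with a Nakajima quiver variety for the extended Dynkin diagram of type $\Delta$. Writing $\rho \simeq \bigoplus_{i=0}^n v_i\rho_i$ and collecting the multiplicities into a dimension vector $\mathbf v = (v_0,\dots,v_n)$, the space $\mathrm{Hilb}^\rho([\SC^2/G_\Delta])$ is the quiver variety $\mathfrak M(\mathbf v,\mathbf w)$ with one-dimensional framing $\mathbf w = e_0$ placed at the affine node; this is precisely the geometry underlying Theorem~\ref{thm:genfunct}. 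The feature I will use is that each $\mathrm{Hilb}^\rho$ is smooth, connected, and, when nonempty, of pure dimension $2\,\mathbf v\cdot\mathbf w - \mathbf v^\top\widehat C\,\mathbf v = 2v_0 - \mathbf v^\top\widehat C\,\mathbf v$, where $\widehat C$ denotes the affine Cartan matrix. Smoothness comes for free, as $\mathrm{Hilb}^\rho$ is a union of connected components of the fixed locus of the finite group $G_\Delta$ acting on the smooth variety $\mathrm{Hilb}^{\dim\rho}(\SC^2)$; connectedness and the dimension formula are standard properties of Nakajima varieties.

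With this in hand the rigidity alternative is immediate. Since $\mathrm{Hilb}^m([\SC^2/G_\Delta]) = \bigsqcup_{\dim\rho=m}\mathrm{Hilb}^\rho$, the connected components of the orbifold Hilbert scheme are exactly the (connected) spaces $\mathrm{Hilb}^\rho$. Hence an ideal lies in $\mathrm{RHilb}^\rho$ if and only if $\dim\mathrm{Hilb}^\rho = 0$, in which case smoothness and connectedness force $\mathrm{Hilb}^\rho$ to be a single reduced point, so that $\chi(\mathrm{RHilb}^\rho)=1$; when $\dim\mathrm{Hilb}^\rho>0$ the space has no zero-dimensional component and $\chi(\mathrm{RHilb}^\rho)=0$. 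Consequently $R_{[\SC^2/G_\Delta]}(q_0,\dots,q_n) = \sum_{\mathbf v}\prod_{i=0}^n q_i^{v_i}$, the sum extending over the dimension vectors $\mathbf v$ that satisfy the rigidity equation $v_0 = \tfrac12\,\mathbf v^\top\widehat C\,\mathbf v$ and have nonempty $\mathrm{Hilb}^\rho$.

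It remains to parametrize these vectors and recover the lattice sum. I would invoke two standard facts about $\widehat C$: its kernel is spanned by the vector of marks $\delta = (\dim\rho_0,\dots,\dim\rho_n)$, the minimal imaginary root, and its restriction to the sublattice $\{v_0=0\}$ obtained by deleting the affine node is the finite Cartan matrix $C_\Delta$. Given a solution $\mathbf v$, put $\mathbf u = \mathbf v - v_0\delta$; as $\delta_0 = \dim\rho_0 = 1$ this vector has zeroth coordinate $0$, so $\mathbf u = (0,m_1,\dots,m_n)$ with $m_i := v_i - v_0\dim\rho_i$. Since $\widehat C\delta = 0$ and $\widehat C$ is symmetric, $\mathbf v^\top\widehat C\,\mathbf v = \mathbf u^\top\widehat C\,\mathbf u = \mathbf m^\top C_\Delta\mathbf m$, so the rigidity equation becomes $v_0 = \tfrac12\mathbf m^\top C_\Delta\mathbf m$. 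This exhibits a bijection between $\SZ^n$ and the rigid dimension vectors: from $\mathbf m\in\SZ^n$ one reconstructs $v_0 = \tfrac12\mathbf m^\top C_\Delta\mathbf m$ and $v_i = m_i + v_0\dim\rho_i$. Feeding these into the monomial and using $\mathbf q = \prod_i q_i^{\dim\rho_i}$ to absorb the $v_0\delta$-shift gives $\prod_{i=0}^n q_i^{v_i} = \prod_{i=1}^n q_i^{m_i}\,(\mathbf q^{1/2})^{\mathbf m^\top C_\Delta\mathbf m}$, which is exactly the $\mathbf m$-th summand of the right-hand side. Summing over $\mathbf m\in\SZ^n$ yields the claim. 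Nonemptiness of each such $\mathrm{Hilb}^\rho$ requires no separate argument, since the same monomial already occurs with positive coefficient in $Z_{[\SC^2/G_\Delta]}$ by Theorem~\ref{thm:genfunct}.

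The crux of the whole argument lies in the first paragraph: pinning down (or citing with care) the quiver-variety description of $\mathrm{Hilb}^\rho([\SC^2/G_\Delta])$, its connectedness, and the dimension formula. Connectedness is the delicate ingredient, for it is what guarantees that a zero-dimensional component is a single reduced point, and hence that each rigid class is counted with multiplicity exactly one; without it the factor $\left(\prod_{m\ge1}(1-\mathbf q^m)^{-1}\right)^{n+1}$ would contaminate the coefficients and block a direct read-off from Theorem~\ref{thm:genfunct}. Once these structural inputs are granted, the rigidity dichotomy and the lattice bookkeeping are purely formal, and the asserted identity — and with it Theorem~\ref{thm0} — follows.
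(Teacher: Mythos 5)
Your proof follows essentially the same route as the paper's: both identify $\mathrm{Hilb}^{\rho}([\SC^2/G_\Delta])$ with a Nakajima quiver variety $M(\mathbf{v},\mathbf{w})$ with framing $\mathbf{w}=(1,0,\dots,0)$ at the affine node, apply the dimension formula $2\mathbf{v}\cdot\mathbf{w}-\langle\mathbf{v},\mathbf{v}\rangle=2v_0-\mathbf{v}^{\top}\widehat{C}\mathbf{v}$, and use the decomposition $\mathbf{v}=k\delta+\tfrac12(\mathbf{m}|\mathbf{m})\delta+(0,\mathbf{m})$ (equivalently, your rigidity equation $v_0=\tfrac12\mathbf{m}^{\top}C_\Delta\mathbf{m}$) to characterize the zero-dimensional components as exactly those indexed by $\mathbf{m}\in\SZ^n$. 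The only real difference is how the multiplicity-one count is justified: you invoke connectedness of quiver varieties together with nonemptiness read off from the positivity of coefficients in Theorem~\ref{thm:genfunct}, whereas the paper obtains the count by matching the coefficient of the corresponding monomial in Theorem~\ref{thm:genfunct}, which is $1$ by uniqueness of the decomposition; both justifications are sound, and yours trades an appeal to a nontrivial but standard connectedness theorem for the paper's coefficient comparison.
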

\begin{proof}

Nakajima has shown \cite[Section~2]{nakajima2002geometric} that 
\[
\mathrm{Hilb}^{v_{0}\rho_{0}+\dotsb +v_{n}\rho_{n}}([\SC^{2} /G_{\Delta}]) = M(\mathbf{v} ,\mathbf{w} )
\]
where $\mathbf{w}  = (1,0,\dotsc ,0)$ and $M(\mathbf{v} ,\mathbf{w}  )$ is the Nakajima
quiver variety associated to the affine Dynkin diagram of $\Delta$
with framing vector $\mathbf{w} $ and dimension vector $\mathbf{v}$. 

Let $	\delta = (1,\mathrm{dim}\,\rho_0,\dotsc ,\mathrm{dim}\,\rho_n) \in \mathbb{N}^{n+1}$, which can also be identified with the basic imaginary root of the affine root system corresponding to $\Delta$.
Then every $\mathbf{v} \in \mathbb{N}^{n+1}$ decomposes uniquely as 
\begin{equation} 
	\label{eq:vkdelta}
	\mathbf{v}=k\delta+\frac{1}{2}(\mathbf{m}|\mathbf{m})\delta+(0,\mathbf{m})
\end{equation} 
where $k \in \mathbb{Z}$, $\mathbf{m} \in \mathbb{Z}^{n}\cong \mathbb{Z}\Delta$ is an element of the finite root lattice and $(\cdot|\cdot)$ is the inner product with respect to the finite Cartan matrix. For this, write $v-v_0\delta$ as $(0,\mathbf{m})$. Then $k=v_0-\frac{1}{2}(\mathbf{m}|\mathbf{m})$, because $\delta_0=1$ always.

By
\cite[(2.6)]{nakajima1994instantons} we have 
\begin{align*}
	\dim M(\mathbf{v} ,\mathbf{w}  )& = 2\mathbf{v} \cdot \mathbf{w}  -\left\langle \mathbf{v} ,\mathbf{v} \right\rangle\\
	&= 2v_{0} - \left\langle \mathbf{v} ,\mathbf{v} \right\rangle\\
	&=2k+ (\mathbf{m} |\mathbf{m} )  - \left\langle \mathbf{v} ,\mathbf{v}\right\rangle
\end{align*}
where $\left\langle \cdot , \cdot  \right\rangle$ is the inner product
given by the Cartan matrix associated with the affine Dynkin diagram.

Moreover,
\[
\left\langle (0,\mathbf{m}) ,(0,\mathbf{m}) \right\rangle = (\mathbf{m} |\mathbf{m} ),\quad
\quad \left\langle \delta ,\delta \right\rangle = 0, \quad \quad
\left\langle (0,\mathbf{m}) ,\delta \right\rangle = 0.
\]
The first follows directly from our definitions, and the later two are
well known properties of the vector $\delta$.  Using these we
compute the dimension of corresponding component of the Hilbert scheme:
\begin{gather*}
	\dim M(\mathbf{v} ,\mathbf{w} )  \\
	=2k+(\mathbf{m}| \mathbf{m}) - \left\langle
\left(k+ \frac{1}{2}( \mathbf{m}| \mathbf{m} )\right)\delta+(0,\mathbf{m}) ,\left(k+ \frac{1}{2}( \mathbf{m}| \mathbf{m} )\right) \delta+(0,\mathbf{m})  \right\rangle \\
	 =2k+ (\mathbf{m}| \mathbf{m} )  -\left(k+ \frac{1}{2}( \mathbf{m}| \mathbf{m} )\right)^2\langle \delta, \delta \rangle \\ - 2\left(k+ \frac{1}{2}( \mathbf{m}| \mathbf{m} )\right) \langle (0,\mathbf{m}), \delta \rangle  - \langle (0,\mathbf{m}), (0,\mathbf{m})\rangle\\
	= 2k.
\end{gather*}

Therefore, the component $\mathrm{Hilb}^{v_{0}\rho_{0}+\dotsb +v_{n}\rho_{n}}([\SC^{2} /G_{\Delta}])$ is of zero dimension if and only if $k=0$ in the decomposition \eqref{eq:vkdelta}. There is exactly one such component for each $\mathbf{m} \in \mathbb{Z}^{n}\cong \mathbb{Z}\Delta$, to which there corresponds the term
\[ q_1^{m_1}\dots q_n^{m_n}(\mathbf{q}^{1/2})^{( \mathbf{m}| \mathbf{m})}=q_1^{m_1}\dots q_n^{m_n}(\mathbf{q}^{1/2})^{\mathbf{m}^\top \cdot C_\Delta \cdot \mathbf{m}}\]
in the generating series. 
\end{proof}

\subsection{Global calculations: the proof of Theorem~\ref{thm:glob}}

To  globalize our results so far we perform a similar calculation as in \cite[Section 2]{bryan2019g}, but we replace the Hilbert scheme with the rigid Hilbert scheme.

As in the introduction, let $X$ be a smooth quasi-projective surface with a symplectic
action of a finite group $G$.  Recall that $p_{1},\dotsc ,p_{r}\in
X/G$ are the singular points of $X/G$. To these there correspond the stabilizer
subgroups $G_{i}\subset G$ of order $k_{i}$ and ADE type
$\Delta_{i}$. Let $\{x_{i}^{1},\dotsc ,x_{i}^{k/k_{i}} \}$ be the
orbit of $G$ in $X$ corresponding to the point $p_{i}$ (recall that
$k=|G|$).  We may stratify $\mathrm{RHilb}([X/G])$ according to the orbit
types of subscheme as follows.

Suppose $Z\subset X$ is a rigid $G$-invariant subscheme of length $nk$ whose
support lies on free orbits. Then $Z$ determines and is determined by
a rigid length $n$ subscheme of 
\[
(X/G)^{o}  = X/G\setminus \{p_{1},\dotsc ,p_{r} \},
\]
i.e. a point in $\mathrm{RHilb}^{n}((X/G)^{o})$. 
But as $(X/G)^{o}$ is smooth, this implies that $n=0$. Hence, $nk=0$ as well, and $Z$ is empty.

On the other hand, suppose $Z\subset X$ is a rigid $G$-invariant subscheme
of length $\frac{nk}{k_{i}}$ supported on the orbit
$\{x_{i}^{1},\dotsc ,x_{i}^{k/k_{i}} \}$. Then $Z$ determines and is
determined by the length $n$ component of $Z$ supported on a formal
neighborhood of one of the points, say $x_{i}^{1}$. Choosing a
$G_{i}$-equivariant isomorphism of the formal neighborhood of
$x_{i}^{1}$ in $X$ with the formal neighborhood of the origin in
$\SC^{2}$, we see that $Z$ determines and is determined by a point in
$\mathrm{RHilb}_{0}^{n}([\mathbb{C}^2/G_{i}])$, 
the rigid Hilbert scheme parameterizing rigid subschemes supported on a formal neighborhood of the origin in
$\SC^{2}$. As all such rigid subschemes are supported on the origin itself,
\[ \mathrm{RHilb}_{0}^{n}([\mathbb{C}^2/G_{i}]) \cong \mathrm{RHilb}^{n}([\mathbb{C}^2/G_{i}]).\]

By decomposing an arbitrary $G$-invariant subscheme into components of
the above types, we obtain a stratification of $\mathrm{RHilb} ([X/G])$ into
strata which are given by products of
$\mathrm{RHilb}([\mathbb{C}^2/G_{1}]),\dotsc ,\mathrm{RHilb}([\mathbb{C}^2/G_{r}])$. Then
using the fact that Euler characteristic is additive under
stratifications and multiplicative under products, we obtain the
following equation of generating functions:
\begin{equation}\label{eqn: stratification formula for sum e(hilb(X)G)}
	\nonumber\sum_{n=0}^{\infty} \chi\left(\mathrm{RHilb}^{n}([X/G]) \right)\, q^{n}
	 = \prod_{i=1}^{r}\left( \sum_{n=0}^{\infty}
	\chi\left(\mathrm{RHilb}^{n}([\mathbb{C}^2/G_{i}]) \right) \, q^{\frac{nk}{k_{i}}}
	\right) .
\end{equation}
This proves Theorem~\ref{thm:glob}.

\section{Eta products}
\label{sec:locetaprod}

\subsection{Review of modular forms and eta products}

We will work with modular forms of possibly half-integer weight. Fix a subgroup $\Gamma$ of finite index in $SL(2,\SZ)$, a function $\chi \colon \Gamma \to \SC^{\ast}$ with $|\chi(A)|=1$ for $A \in \Gamma$, and a half-integer $k$. Then a holomorphic function $f \colon \mathbb{H} \to \SC$ is said to transform as a modular form of weight $k$ with the multiplier system $\chi$ for $\Gamma$ if
\[ f\left( \frac{a\tau+b}{c\tau+d}\right)=\chi(A)(c\tau+d)^k f(\tau) \quad \textrm{ for all } A=\begin{pmatrix}
	a & b \\ c & d
\end{pmatrix} \in \Gamma. \]
When $k$ is not an integer, $(c\tau+d)^k$ is understood to be a principal value. If moreover $f$ is holomorphic at all the cusps of $\Gamma$ on $\SQ \cup \{ \infty\}$, then $f$ is said to be a modular form. The space of modular forms of weight $k$ and multiplier system $\chi$ for $\Gamma$ is denoted by $M_k(\Gamma, \chi)$.

An eta product is a finite product
\begin{equation} f(\tau)=\prod_m \eta(m \tau )^{a_m} 
\label{eq:etaproddef}
\end{equation}
where $m$ runs through a finite set of positive integers and the exponents
$a_m$ may take values from $\SZ$. The least common multiple of all $m$ such that $a_m\neq 0$ will be denoted by $N$; it is called the level of $f(\tau)$. 

Eta products are known to transform as a modular form for $\Gamma_0(N)$ of weight \[\frac{1}{2}\sum_m a_m.\] 

The multiplier system of the eta function is given by a formula of Petersson. For this we need some notation. Let $\mathrm{sgn}(x)=\frac{x}{|x|}$ be the sign of a real number $x \neq 0$. For short, we will write $e(z)$ for the function $e^{2 \pi i z}$ with $z \in \mathbb{C}$. Let $c$ and $d$ be integers such that their greatest common divisor $(c,d)=1$, $d$ is odd and $c \neq 0$. Denote with $\left(\frac{c}{d}\right)$ their Legrende-Jacobi-Kronecker symbol. Then let
\[ \left(\frac{c}{d}\right)^\ast=\left(\frac{c}{|d|}\right)\quad \textrm{and} \quad\left(\frac{c}{d}\right)_\ast= \left(\frac{c}{|d|}\right)\cdot (-1)^{\frac{1}{4}(\mathrm{sgn}(c)-1)(\mathrm{sgn}(d)-1)}.\]
Put furthermore
\[  \left(\frac{0}{1}\right)^\ast=\left(\frac{0}{-1}\right)^\ast=1, \quad \left(\frac{0}{1}\right)_\ast=1, \quad \left(\frac{0}{-1}\right)_\ast=-1. \]
Then the multiplier system of the eta function is given in \cite[Section 4.1]{knopp2008modular}, \cite[Section 1.3]{kohler2011eta}:
\begin{equation}
\label{eq:veta}
v_\eta(A)=\begin{cases}\left(\frac{d}{c}\right)^{\ast}e(\frac{1}{24}((a+d)c -bd(c^2-1)-3c)), & \textrm{if }c\textrm{ is odd,}\\
\left(\frac{c}{d}\right)_{\ast}e(\frac{1}{24}((a+d)c -bd(c^2-1)+3d-3-3cd)), & \textrm{if }c\textrm{ is even.}
\end{cases}
\end{equation}
Using this, the multiplier system of the eta product \eqref{eq:etaproddef} is (see \cite[Section 2.1]{kohler2011eta}):
\[v_f(A)=\prod_{m}\left(v_\eta\begin{pmatrix}
a & mb \\ c/m & d
\end{pmatrix}\right). \]
For the eta product 
\eqref{eq:etaproddef} 
we will denote by
\[ (f)^\ast(A):=\prod_{m} \left(\left(\frac{d}{c/m}\right)^{\ast}\right)^{a_m} \]
and
\[ (f)_\ast(A):=\prod_{m} \left(\left(\frac{c/m}{d}\right)_{\ast}\right)^{a_m} \]
the terms of the multiplier system $v_f(A)$ coming from the extended Kronecker symbol when $c$ is respectively odd or even.
Using this notation and expression \eqref{eq:veta} the multiplier system $v_f(A)$ of the eta product \eqref{eq:etaproddef} can be written explicitly as follows.
\begin{corollary}
\label{cor:fmultexp}
\[ v_f(A)=\begin{cases}
	\begin{split}
	(f)^\ast(A)e\left(\frac{1}{24}\left(((a+d)c -bdc^2-3c)\left(\sum_m \frac{a_m}{m}\right)+bd\left(\sum_m ma_m\right)\right)\right), \\ \textrm{if }c\textrm{ is odd,}
	\end{split}
	\\
	\begin{split}
	(f)_\ast(A)e\Bigg(\frac{1}{24}\Bigg(((a+d)c -bdc^2-3cd)\left(\sum_m \frac{a_m}{m}\right)+bd\left(\sum_m ma_m\right)\\
	+3(d-1)\left(\sum_m a_m\right)\Bigg)\Bigg), \\ \textrm{if }c\textrm{ is even.}
\end{split}
\end{cases}
\]
\end{corollary}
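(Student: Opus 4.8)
The plan is to prove Corollary~\ref{cor:fmultexp} by a direct substitution of Petersson's formula \eqref{eq:veta} into the product expression defining the multiplier system of the eta product \eqref{eq:etaproddef}, namely
\[ v_f(A)=\prod_{m}\left(v_\eta\begin{pmatrix} a & mb \\ c/m & d \end{pmatrix}\right)^{a_m}, \]
followed by separating the contribution of the Legendre--Jacobi--Kronecker symbols from that of the exponential factors. The entire argument is bookkeeping; the only genuine decision is, for each index $m$, which of the two branches of \eqref{eq:veta} applies, and this is dictated by the parity of the lower-left entry $c/m$. First I would record that for $A\in\Gamma_0(N)$ we have $N\mid c$, and since $N$ is the least common multiple of those $m$ with $a_m\neq 0$, each such $m$ divides $c$; hence $c/m\in\SZ$ and each matrix above is a genuine integer matrix to which \eqref{eq:veta} applies.

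I would treat the case $c$ odd first, since it is clean. Here $m\mid c$ forces every $c/m$ to be odd, so each factor is governed by the first line of \eqref{eq:veta}. The symbols then collect into precisely $(f)^\ast(A)=\prod_m\big((\tfrac{d}{c/m})^\ast\big)^{a_m}$, while the exponential arguments add. Carrying out the substitutions $a\mapsto a$, $b\mapsto mb$, $c\mapsto c/m$, $d\mapsto d$ in the exponent of \eqref{eq:veta}, multiplying by $a_m$, and summing, I would expand $(c/m)^2$ and sort the result into the three natural types of sums $\sum_m a_m/m$, $\sum_m m a_m$, and $\sum_m a_m$. Collecting the coefficient of $\sum_m a_m/m$ yields $(a+d)c-bdc^2-3c$, together with the single term $bd\sum_m m a_m$, reproducing the stated odd-$c$ formula.

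The even-$c$ case is where I expect the main obstacle. When $c$ is even the integers $c/m$ need not all be even, so the factors with $c/m$ odd are, strictly speaking, governed by the \emph{first} branch of \eqref{eq:veta} rather than the second, whereas the corollary applies the second branch uniformly through $(f)_\ast(A)$. The plan is to resolve this by showing that for each factor with $c/m$ odd the two branches in fact agree. Since $A\in SL(2,\SZ)$ gives $\gcd(c/m,d)=1$, and $c$ even forces $d$ odd, one reduces the equality of the two branch-values to the identity
\[ \frac{(\tfrac{d}{c/m})^\ast}{(\tfrac{c/m}{d})_\ast}=e\!\left(\frac{(d-1)\,(1-c/m)}{8}\right), \]
which is exactly quadratic reciprocity for the extended Kronecker symbols, both sides being $\pm1$ because $d-1$ and $1-c/m$ are even. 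Verifying this reciprocity identity (with careful tracking of the sign decorations for negative entries) is the technical heart of the argument.

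Granting that the second branch of \eqref{eq:veta} is therefore valid for every $m$, the remainder parallels the odd case: the symbols assemble into $(f)_\ast(A)$, and the exponents sum, the extra summands $3d-3-3(c/m)d$ contributing $3(d-1)\sum_m a_m$ and $-3cd\sum_m a_m/m$. Collecting then gives the coefficient $(a+d)c-bdc^2-3cd$ of $\sum_m a_m/m$ together with the additional term $3(d-1)\sum_m a_m$, matching the stated even-$c$ formula and completing the proof.
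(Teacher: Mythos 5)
Your proposal is correct, and it is essentially the computation the paper intends: the paper states Corollary~\ref{cor:fmultexp} with no proof at all, presenting it as an immediate consequence of Petersson's formula \eqref{eq:veta} and the product formula for $v_f(A)$, and your substitute-and-collect argument, including the coefficient bookkeeping in both parity cases, carries this out accurately. The added value of your write-up is that it makes explicit the one point the paper glosses over: when $c$ is even, the cofactors $c/m$ need not all be even (this happens whenever $m$ carries the full $2$-part of $c$), so \eqref{eq:veta} as stated assigns those factors to the \emph{first} branch, while the corollary's $(f)_\ast(A)$ applies the second branch uniformly. The identity you reduce this to,
\[
\left(\frac{d}{c/m}\right)^{\ast}\bigg/\left(\frac{c/m}{d}\right)_{\ast}
=e\left(\frac{(d-1)(1-c/m)}{8}\right)
\qquad \text{for } c/m,\ d \text{ odd and coprime},
\]
is exactly what is needed --- the exponents of the two branches of \eqref{eq:veta} differ by $3d-3-3(c/m)d+3(c/m)=3(d-1)(1-c/m)$ --- and it is true: it is sign-decorated Jacobi--Kronecker reciprocity, equivalently the statement that the two branches of Petersson's formula agree on matrices whose lower row has both entries odd. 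It does require checking all four sign cases of $(c/m,d)$, using ordinary Jacobi reciprocity for the absolute values, the evaluation $\left(\frac{-1}{u}\right)=(-1)^{(u-1)/2}$, and the decoration $(-1)^{\frac14(\mathrm{sgn}(c/m)-1)(\mathrm{sgn}(d)-1)}$ in the definition of $(\cdot)_\ast$; all four cases work out, so your plan goes through. One further point you got right implicitly: your starting formula $v_f(A)=\prod_m v_\eta\bigl(\begin{smallmatrix} a & mb \\ c/m & d \end{smallmatrix}\bigr)^{a_m}$ carries the exponent $a_m$, which is the correct statement (apply $\eta(mA\tau)=\eta(A_m(m\tau))$ to each factor $\eta(m\tau)^{a_m}$); the paper's displayed formula omits these exponents, evidently a typo, since its own definitions of $(f)^\ast$, $(f)_\ast$ and the corollary itself require them.
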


Since $\eta(\tau)$ is nonzero on $\mathbb{H}$, an eta quotient never has finite poles. The only issue for an eta product to be a modular form is whether the numerator vanishes to at least the same order as the denominator at each cusp. Recall that cusps of $\Gamma_0(N)$ are in bijection with the orbits of $\Gamma_0(N)$ on the set $\mathbb{Q} \cup \{ \infty\}$. 
\begin{lemma}[{\cite[Chapter 2]{kohler2011eta}}]
\label{lem:cusporder}
Let $f$ be an eta product as in \eqref{eq:etaproddef} and let $N$ be its level. 
\begin{enumerate}
\item  The order of $f$ at the cusp $\infty$ is:
\[\mathrm{ord}(f,\infty)=\frac{1}{24}\sum_{m} ma_m.\]
\item\label{it:cusporderdiv} The order of $f$ at the cusp $r=-\frac{d}{c} \in \mathbb{Q}$, $(c,d)=1$ is
\[ \mathrm{ord}(f,r)=\frac{1}{24}\sum_{m} \frac{(c,m)^2}{m}a_m.\]
\end{enumerate}
\end{lemma}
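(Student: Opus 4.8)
The plan is to prove the two parts separately: part (1) will be an immediate consequence of the product expansion of $\eta$, while part (2) will require the transformation theory of the eta function together with a matrix factorization. For part (1), I would start from the classical product expansion
\[\eta(\tau)=q^{1/24}\prod_{n=1}^{\infty}(1-q^{n}),\qquad q=e^{2\pi i\tau},\]
so that $\eta(m\tau)=q^{m/24}\prod_{n\ge 1}(1-q^{mn})$ has leading term $q^{m/24}$. Since each infinite product is an invertible power series in $q$ with constant term $1$, the order of $f(\tau)=\prod_m\eta(m\tau)^{a_m}$ in the variable $q$ at the cusp $\infty$ is read off directly from the leading exponents, giving $\frac{1}{24}\sum_m m a_m$.

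For part (2), since the order of a product is the sum of the orders of the factors (and $\mathrm{ord}(\eta(m\tau)^{a_m})=a_m\,\mathrm{ord}(\eta(m\tau))$), it suffices to compute the order of a single factor $\eta(m\tau)$ at the cusp $r=-\frac{d}{c}$ with $(c,d)=1$. I would choose $\gamma\in SL(2,\SZ)$ with $\gamma\cdot\infty=r$ and study the behaviour of $\eta(m\gamma\tau)$ as $\tau\to i\infty$. The central object is the integer matrix
\[M=\begin{pmatrix}m&0\\0&1\end{pmatrix}\gamma,\qquad\det M=m,\]
which realizes the map $\tau\mapsto m\gamma\tau$. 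Using the Hermite normal form I would factor $M=\gamma'U$ with $\gamma'\in SL(2,\SZ)$ and $U=\begin{pmatrix}u_1&u_2\\0&u_4\end{pmatrix}$ upper triangular, $u_1u_4=m$. A direct row-reduction computation, exploiting $(c,d)=1$ to replace $\gcd(c,md)$ by $\gcd(c,m)$, then shows that $|u_1|=(c,m)$ and $|u_4|=m/(c,m)$.

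With this factorization in hand, I would apply the transformation law $\eta(\gamma'\sigma)=v_\eta(\gamma')(\cdots)^{1/2}\eta(\sigma)$. Since the multiplier $v_\eta(\gamma')$ and the automorphy factor are nonvanishing, they do not affect the order, so the leading behaviour is governed by
\[\eta(U\tau)=\eta\!\left(\tfrac{u_1}{u_4}\tau+\tfrac{u_2}{u_4}\right)\sim e\!\left(\tfrac{u_2}{24u_4}\right)q^{\,u_1/(24u_4)},\]
whose leading $q$-exponent is $\frac{u_1}{24u_4}=\frac{(c,m)^2}{24m}$. Summing the contributions $a_m\frac{(c,m)^2}{24m}$ over $m$ yields the stated formula. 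The main obstacle is the matrix-factorization step together with the associated normalization of the local uniformizer at the cusp (its width), which must be fixed so that the order emerges exactly as $\frac{1}{24}\sum_m\frac{(c,m)^2}{m}a_m$ with no spurious width factor; once the identification $|u_1|=(c,m)$ is established, everything else is routine bookkeeping.
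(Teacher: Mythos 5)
Your proof is correct, but note that the paper itself offers no argument for this statement at all: it is imported verbatim from K\"ohler's book (the citation \cite[Chapter 2]{kohler2011eta}), so there is no ``paper proof'' to match. What you have written is essentially the standard derivation, and in fact it is close to K\"ohler's own: part (1) from the product expansion, and part (2) via the factorization $\begin{pmatrix}m&0\\0&1\end{pmatrix}\gamma=\gamma'U$ with $\gamma'\in SL(2,\SZ)$ and $U$ upper triangular. Your key identification $|u_1|=(c,m)$ is right, and the cleanest way to see it is that an $SL(2,\SZ)$ change of basis preserves the gcd of the entries of the first column, which is $\gcd(md,c)=\gcd(m,c)$ since $(c,d)=1$; together with $u_1u_4=\det M=m$ (both entries can be taken positive, as $u_1u_4>0$) this gives the leading exponent $u_1/(24u_4)=(c,m)^2/(24m)$. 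Two small points worth making explicit. First, the multiplier $v_\eta(\gamma')$ and the automorphy factor $(c_{\gamma'}U\tau+d_{\gamma'})^{1/2}$ grow at most polynomially in $\tau$, so they genuinely cannot shift the exponential order in $q$; this is the precise reason they are harmless. Second, the ``width'' issue you flag is not really an obstacle but a matter of convention: the formula in the lemma is exactly the leading exponent of $e^{2\pi i\tau}$ in $f(\gamma\tau)$, i.e.\ precisely the quantity you compute, with no width factor inserted (this is K\"ohler's normalization, and it is the one the paper uses; note that for the holomorphicity criterion quoted right after the lemma, rescaling by the positive cusp width changes nothing). With those two remarks pinned down, your argument is complete.
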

As  $\mathrm{ord}(f,r)$ at $r=-\frac{d}{c}$ depends only on the denominator $c$, it is in fact enough to check the order of vanishing at cusps of the form $\frac{1}{c}$ where $c$ is a divisor of $N$. In other words, the data of orders at all cusps can be reconstructed from the data of orders at the cusps $\{ \frac{1}{c}: c|N,\, c> 0\}$.
Applying this observation, the following result gives an explicit  condition for an eta product to be a modular form.
\begin{proposition}[{\cite[Corollary 2.3]{kohler2011eta}}] An eta product $f$ as in \eqref{eq:etaproddef} is holomorphic for $\Gamma_0(N)$ if and only if the inequalities
\[ \sum_{m} \frac{(c,m)^2}{m}a_m \geq 0 \]
hold for all positive divisors $c$ of $N$. It is a cuspidal eta product in and only if all these inequalities hold strict.
\end{proposition}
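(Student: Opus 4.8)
The plan is to deduce the statement directly from the order computations already recorded in Lemma~\ref{lem:cusporder}, so that the proof becomes pure bookkeeping. Recall that it was stated above that every eta product transforms as a modular form of weight $\tfrac{1}{2}\sum_m a_m$ for $\Gamma_0(N)$ with multiplier system $v_f$; hence the transformation law is automatic and need not be re-examined. Consequently $f$ fails to be a modular form only through a failure of holomorphicity, either on $\mathbb{H}$ or at a cusp. On $\mathbb{H}$ there is nothing to check: each factor $\eta(m\tau)$ is holomorphic and, as recalled just above, nowhere vanishing on the upper half-plane, so each integer power $\eta(m\tau)^{a_m}$ is holomorphic (indeed nonzero) there, and therefore so is the finite product $f$ in \eqref{eq:etaproddef}. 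Thus the whole question reduces to whether $\mathrm{ord}(f,r)\ge 0$ at every cusp $r\in\SQ\cup\{\infty\}$.

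Next I would substitute the explicit order formula. By Lemma~\ref{lem:cusporder}\eqref{it:cusporderdiv}, at a finite cusp $r=-\tfrac{d}{c}$ with $(c,d)=1$ and $c\neq 0$ one has $\mathrm{ord}(f,r)=\tfrac{1}{24}\sum_m\tfrac{(c,m)^2}{m}a_m$, while at $\infty$ one has $\mathrm{ord}(f,\infty)=\tfrac{1}{24}\sum_m m a_m$. Since the factor $\tfrac{1}{24}$ is positive, holomorphicity of $f$ at $r$ is equivalent to the inequality $\sum_m\tfrac{(c,m)^2}{m}a_m\ge 0$ (respectively $\sum_m m a_m\ge 0$ at $\infty$), the common factor being irrelevant to the sign.

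Then I would carry out the reduction to divisors of $N$ using the observation recorded immediately before the statement: $\mathrm{ord}(f,r)$ depends only on the denominator $c$ of $r$ in lowest terms, and the complete list of cusp orders for $\Gamma_0(N)$ is recovered from the values at the cusps $\{\tfrac1c:c\mid N,\ c>0\}$. Each such $\tfrac1c$ is written as $-\tfrac{d}{c}$ with $d=-1$, so $(c,d)=1$ and $c\ge 1$, and the finite-cusp formula applies. The cusp $\infty$ is absorbed into this family as the case $c=N$: since $\tfrac1N=\bigl(\begin{smallmatrix}1&0\\N&1\end{smallmatrix}\bigr)\cdot\infty$ we have $\tfrac1N\sim\infty$ under $\Gamma_0(N)$, and because $N=\operatorname{lcm}\{m:a_m\neq 0\}$ every contributing $m$ divides $N$, whence $(N,m)=m$ and $\tfrac{(N,m)^2}{m}=m$, so $\sum_m\tfrac{(N,m)^2}{m}a_m=\sum_m m a_m$ reproduces exactly the order at $\infty$. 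Therefore $f$ is holomorphic at all cusps precisely when $\sum_m\tfrac{(c,m)^2}{m}a_m\ge 0$ for every positive divisor $c$ of $N$, which is the asserted criterion; for the cuspidal refinement one runs the identical equivalences with ``$>$'' in place of ``$\ge$'', recalling that $f$ is a cusp form exactly when it vanishes at every cusp, i.e.\ $\mathrm{ord}(f,r)>0$ for all $r$.

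The argument is essentially assembly once the order formula is available, so no genuinely hard computation remains. The single point I would state explicitly rather than leave implicit — and the place where care is needed — is the verification that enumerating the positive divisors $c\mid N$ really does exhaust all cusp classes of $\Gamma_0(N)$, with $\infty$ accounted for by $c=N$; this guarantees that no cusp at which holomorphicity might fail has been overlooked, and it is what legitimizes passing from ``holomorphic at every cusp'' to the finite system of inequalities indexed by divisors of $N$.
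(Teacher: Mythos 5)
Your proposal is correct and follows essentially the route the paper intends: the paper states this result as a citation of K\"ohler's Corollary 2.3, justified by exactly the ingredients you assemble, namely the nonvanishing of $\eta$ on $\mathbb{H}$, the order formulas of Lemma~\ref{lem:cusporder}, and the observation that the cusps $\{\tfrac{1}{c} : c \mid N,\ c>0\}$ (with $\infty$ corresponding to $c=N$, where $(N,m)=m$ recovers the order at $\infty$) exhaust the cusp classes relevant for the inequality. Your explicit check that the $c=N$ inequality reproduces $\mathrm{ord}(f,\infty)$ is a worthwhile detail that the paper leaves implicit.
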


We also need the following.
\begin{lemma}
\label{lem:higherlevel}
Suppose that an eta product $f(\tau)$ as in \eqref{eq:etaproddef} is holomorphic (resp. cuspidal) for  $\Gamma_0(N)$. Then $f(L\tau)$ is holomorphic (resp. cuspidal) for $\Gamma_0(NL)$ for any integer $L > 0$.
\end{lemma}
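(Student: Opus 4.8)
The plan is to recognize that $f(L\tau)$ is itself an eta product and then apply the holomorphy/cuspidality criterion of \cite[Corollary 2.3]{kohler2011eta} termwise, reducing each inequality at level $NL$ to one of the hypothesized inequalities at level $N$. Writing $f(\tau)=\prod_m \eta(m\tau)^{a_m}$ and substituting $L\tau$ gives $f(L\tau)=\prod_m \eta((mL)\tau)^{a_m}$, which is the eta product with exponents $b_{mL}=a_m$ (and $b_{m'}=0$ whenever $L\nmid m'$). Its level is $\mathrm{lcm}\{mL : a_m\neq 0\}=L\cdot\mathrm{lcm}\{m : a_m\neq 0\}=NL$, as required. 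By the criterion of \cite[Corollary 2.3]{kohler2011eta} it then suffices to prove that
\[ S(c'):=\sum_m \frac{(c',mL)^2}{mL}\,a_m \;\geq\; 0 \qquad(\text{resp. } >0) \]
for every positive divisor $c'$ of $NL$, given that $\sum_m \frac{(c,m)^2}{m}a_m\geq 0$ (resp. $>0$) for every positive divisor $c$ of $N$.

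The key step is a gcd manipulation that turns $S(c')$ into a positive multiple of one of the level-$N$ expressions. Fix a positive divisor $c'$ of $NL$ and set $g:=(c',L)$ and $u:=c'/g$, so that $L=gw$ with $(u,w)=1$. Then $(c',mL)=(gu,\,gmw)=g\,(u,mw)=g\,(u,m)$, the last equality because $(u,w)=1$ forces any divisor of $u$ dividing $mw$ to divide $m$. Consequently
\[ \frac{(c',mL)^2}{mL}=\frac{g^2}{L}\cdot\frac{(u,m)^2}{m}, \]
so that $S(c')=\dfrac{g^2}{L}\sum_m \dfrac{(u,m)^2}{m}\,a_m$ with $\dfrac{g^2}{L}>0$ a constant independent of $m$. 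It remains only to check that $u$ is a positive divisor of $N$: from $c'\mid NL$ we get $gu\mid Ngw$, i.e.\ $u\mid Nw$, and since $(u,w)=1$ this yields $u\mid N$. Applying the hypothesis to the divisor $u$ of $N$ gives $\sum_m \frac{(u,m)^2}{m}a_m\geq 0$ (resp. $>0$), whence $S(c')\geq 0$ (resp. $>0$). Letting $c'$ range over all positive divisors of $NL$ establishes that $f(L\tau)$ is holomorphic (resp. cuspidal) for $\Gamma_0(NL)$.

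The only mildly delicate point, and the one I would write out most carefully, is the factorization $(c',mL)=(c',L)\cdot(u,m)$ together with the divisibility $u\mid N$; everything else is bookkeeping built on the cusp-order formula of Lemma~\ref{lem:cusporder}\eqref{it:cusporderdiv} and the criterion of \cite[Corollary 2.3]{kohler2011eta}. I do not expect a genuine obstacle here: the rescaling $\tau\mapsto L\tau$ interacts transparently with the cusp-order formula once the greatest common divisor is factored as above, and the constant $\frac{g^2}{L}$ being strictly positive is exactly what lets the cuspidal case pass through along with the holomorphic one.
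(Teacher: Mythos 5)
Your proof is correct, and it follows the same overall strategy as the paper's: regard $f(L\tau)$ as an eta product of level $NL$, apply the cusp-order formula of Lemma~\ref{lem:cusporder}\eqref{it:cusporderdiv}, and convert the inequality at each positive divisor $c'$ of $NL$ into a positive multiple of a hypothesized inequality at a divisor of $N$. The one genuine difference is that your handling of the gcd is tighter than the paper's. The paper takes an \emph{arbitrary} factorization $c'=c_1c_2$ with $c_1\mid N$, $c_2\mid L$ and then uses the identity $(c_1c_2,Lm)=(c_1,m)(c_2,L)$; this identity fails for general factorizations of that shape (for instance $N=L=4$, $c'=4$, $c_1=4$, $c_2=1$, $m=2$ gives $(c',Lm)=(4,8)=4$ but $(c_1,m)(c_2,L)=(4,2)(1,4)=2$), so the paper's computation is valid only for a suitably chosen decomposition. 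Your choice $g=(c',L)$, $u=c'/g$ is exactly such a choice: the coprimality $(u,w)=1$ with $w=L/g$ is what legitimizes $(c',mL)=g\,(u,m)$, and your verification that $u\mid N$ (again via $(u,w)=1$) completes the reduction to a divisor of $N$. So your argument not only reproduces the paper's proof but supplies the justification that the paper's write-up glosses over; in both versions the strictly positive constant $g^2/L$ (the paper's $c_2^2/L$) is what lets the holomorphic and cuspidal cases pass through simultaneously.
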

\begin{proof}
Let $c|NL$ be a divisor. It can be written as $c=c_1c_2$ where $c_1|N$ and $c_2|L$ (this decomposition is not unique). Then the order of $f(L\tau)$ at $r=\frac{1}{c}$ is
\[ 
\begin{aligned}
\mathrm{ord}(f(L\tau),r)& =\frac{1}{24}\sum_{m} \frac{(c,Lm)^2}{Lm}a_m =\frac{1}{24}\sum_{m} \frac{(c_1c_2,Lm)^2}{Lm}a_m\\
& = \frac{1}{24}\sum_{m} \frac{(c_1,m)^2(c_2,L)^2}{Lm}a_m = \frac{1}{24}\sum_{m} \frac{(c_1,m)^2c_2^2}{Lm}a_m \\
& = \left(\frac{1}{24}\sum_{m} \frac{(c_1,m)^2}{m}a_m\right)\cdot \frac{c_2^2}{L}  =\mathrm{ord}\left(f(\tau),\frac{1}{c_1}\right)\cdot \frac{c_2^2}{L}.
\end{aligned}
\]
By the assumption on $f(\tau)$, this is nonnegative (positive).
\end{proof}

\subsection{Modularity of the eta product expressions for Kleinian orbifolds}

Recall the following result.
\begin{theorem}[{\cite[Theorem 1.2]{bryan2019g}}] 
\label{thm:ZDeltaeta}	
	The $G_{\Delta}$-fixed partition functions can be expressed as eta products as follows.
		\begin{enumerate}
		\item \[Z_{A_n}(\tau)= \frac{1}{\eta(\tau)}\quad\textrm{ for } n\geq 1.\]
		\item
		\[Z_{D_n}(\tau)= \frac{\eta^2(2\tau)\eta((4n-8)\tau)}{\eta(\tau)\eta(4\tau)\eta^2((2n-4)\tau)} \quad\textrm{ for } n \geq 4. \]
		\item \[Z_{E_6}(\tau)= \frac{\eta^2(2\tau)\eta(24\tau)}{\eta(\tau)\eta^2(8\tau)\eta(12\tau)}.\]
		\item \[Z_{E_7}(\tau)= \frac{\eta^2(2\tau)\eta(48\tau)}{\eta(\tau)\eta(12\tau)\eta(16\tau)\eta(24\tau)}.\]
		\item \[Z_{E_8}(\tau)= \frac{\eta^2(2\tau)\eta(120\tau)}{\eta(\tau)\eta(24\tau)\eta(40\tau)\eta(60\tau)}.\]
	\end{enumerate}
\end{theorem}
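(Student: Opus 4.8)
The plan is to reduce each of the five identities to a single identity between a root‑lattice theta function and an explicit eta product, and then to verify that identity using the theory of (half‑)integral weight modular forms from Section~\ref{sec:locetaprod}. Combining the lattice formula of Theorem~\ref{thm:genfunct} with the specialization $q_i = q^{\dim\rho_i}$ of Lemma~\ref{lem:thetasubst} and Göttsche's factorization, I would first write
\[ Z_\Delta(\tau) = \frac{\theta_\Delta(\tau)}{\eta(k\tau)^{n+1}}, \qquad \theta_\Delta(\tau) = \sum_{\mathbf{m}\in\SZ^n} q^{\,\mathbf{d}\cdot\mathbf{m} + \frac{k}{2}\,\mathbf{m}^\top C_\Delta\mathbf{m}}, \]
where $\mathbf{d} = (\dim\rho_1,\dots,\dim\rho_n)$. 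Multiplying through by $\eta(k\tau)^{n+1}$, the claim in each ADE type becomes an identity of the shape ``(shifted) theta function of the scaled root lattice $(\SZ\Delta, k(\cdot|\cdot))$ equals an explicit product of eta functions.''

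The core of the argument is to prove these theta--eta identities, which I would do by exhibiting both sides as holomorphic modular forms of weight $n/2$ for $\Gamma_0(k)$ sharing a common multiplier system, and then invoking the valence formula. For the eta‑product side the weight, the level $N=k$ (which one checks equals $|G_\Delta|$ in every case), and the multiplier system are supplied directly by Corollary~\ref{cor:fmultexp} and Lemma~\ref{lem:cusporder}. For the theta side, completing the square in the exponent presents $\theta_\Delta$ as the theta series of a coset of an even lattice; its weight is $n/2$, and its level and Nebentypus character are read off from the discriminant form of $(\SZ\Delta, k(\cdot|\cdot))$ together with the glue vector determined by $\mathbf{d}$. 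Once both sides lie in the same finite‑dimensional space $M_{n/2}(\Gamma_0(k),\chi_\Delta)$, the valence formula reduces each identity to checking agreement of the Fourier expansions up to order $B \approx \tfrac{n}{24}\,[\mathrm{SL}(2,\SZ):\Gamma_0(k)]$; this is a finite computation, carried out by enumerating the finitely many lattice vectors $\mathbf{m}$ contributing below the bound and comparing with the product expansion coming from $\eta(\tau) = q^{1/24}\prod_{m\geq 1}(1-q^m)$.

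For the infinite families $A_n$ and $D_n$ one can sidestep the case‑by‑case bound by appealing instead to the relevant Macdonald identity (the specialized affine Weyl--Kac denominator formula), which expresses a Weyl‑symmetric theta sum over a root lattice directly as an eta product; for $A_n$ this descends to a classical theta--product identity of Jacobi‑triple‑product type and yields $\theta_{A_n}(\tau) = \eta((n+1)\tau)^{n+1}/\eta(\tau)$, i.e. $Z_{A_n} = 1/\eta(\tau)$, in one step.

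The main obstacle is twofold. Conceptually, the crux is pinning down the exact level and multiplier system of the shifted theta function $\theta_\Delta$: because $n$ may be odd (for $A_n$ with $n$ odd, for $D_n$ with $n$ odd, and for $E_7$) the weight $n/2$ is genuinely half‑integral, and matching the eta and theta multiplier systems requires the careful bookkeeping of the Petersson multiplier recorded in \eqref{eq:veta} and Corollary~\ref{cor:fmultexp}. Computationally, the hard cases are the exceptional types $E_6, E_7, E_8$, where no uniform family is available, the level $k \in \{24,48,120\}$ is large, and both the valence bound $B$ and the number of contributing lattice vectors grow accordingly, so the finite verification is the most laborious part of the proof.
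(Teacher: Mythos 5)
Your proposal must first be measured against the right target: this paper never proves Theorem~\ref{thm:ZDeltaeta} --- it is imported verbatim from \cite[Theorem 1.2]{bryan2019g} and used as an input. Moreover, the logic in that reference runs in the opposite direction to yours: the eta-product expressions are obtained there by a computation of the $G_{\Delta}$-fixed loci that is independent of the lattice-sum expression, and the resulting theta--eta identities $\theta_\Delta=\eta_\Delta$ are recorded afterwards by comparing the two answers (this is exactly what the introduction here means by ``the equality of the two expressions \dots was already observed in \cite[Section 4]{bryan2019g}''). You instead take Theorem~\ref{thm:genfunct} as the starting point and propose to prove $\theta_\Delta=\eta_\Delta$ directly by modular-form finiteness arguments. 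That reversal is legitimate in principle, and for the exceptional cases it can be made to work: the eta side is holomorphic at all cusps (that is the content of Lemmas~\ref{lem:an}--\ref{lem:e8}), the theta side is a cusp-holomorphic theta series of a shifted rescaled positive-definite lattice, and one does not even need to pin down the optimal level $\Gamma_0(k)$ and matching multipliers --- it suffices to pass to a common small congruence subgroup (say a principal congruence subgroup trivializing both multiplier systems) and apply the Sturm/valence bound there, at the cost of a larger but still finite coefficient check. One bookkeeping slip: $\theta_\Delta$ must carry the prefactor $q^{((n+1)k-1)/24}$, since the eta side has order $\frac{1}{24}((n+1)k-1)$ at $\infty$ (Corollary~\ref{cor:etaorder}); as displayed, your identity fails already in its lowest term.

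The genuine gap is in the infinite families, and your own fallback does not close it. A valence-bound verification is a finite check at a \emph{fixed} level, so it proves one root system at a time; it cannot establish the statements for all $A_n$ and all $D_n$, whose levels $n+1$ and $4n-8$ grow without bound. For type $A$ your escape route is fine: the required identity is classical (the $(n+1)$-core/quotient decomposition of partitions, equivalently a specialized denominator formula for $\widehat{\mathfrak{sl}}_{n+1}$, reducing to the Jacobi triple product for $A_1$), and it yields $Z_{A_n}=1/\eta(\tau)$ uniformly in $n$. But for $D_n$ the appeal to ``the relevant Macdonald identity'' does not go through: Macdonald's identities express eta powers attached to affine root systems as \emph{signed} (Weyl-numerator) sums over a lattice, whereas what you need is an \emph{unsigned} theta series of the $D_n$ root lattice rescaled by $4n-8$ and shifted by the marks vector, equated with the specific quotient $\eta^2(2\tau)\eta^{n+2}((4n-8)\tau)/\bigl(\eta(\tau)\eta(4\tau)\eta^2((2n-4)\tau)\bigr)$. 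These type-$D$ (and type-$E$) identities are precisely the ones presented in \cite[Section 4]{bryan2019g} as consequences of their independently proven theorem rather than as quotable classical facts. So as written your argument covers types $E_6$, $E_7$, $E_8$ (modulo laborious but finite checks) and type $A$, while the entire $D_n$ family remains unproven; repairing it would require a uniform-in-$n$ argument --- for instance a folding/involution reduction of the $D_n$ identity to type $A_{2n-5}$, which essentially amounts to reconstructing the geometric proof of \cite{bryan2019g} --- not a finite computation.
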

\begin{remark}
	The coefficients in the eta products of Theorem~\ref{thm:ZDeltaeta} can be interpreted in terms of combinatorial numbers associated with the finite subgroup $G_{\Delta} < SL(2,\SZ)$ corresponding to $\Delta$. For the details see \cite[Section 3.2]{bryan2019g}.
\end{remark}
\begin{corollary}[{\cite[Theorem 1.3]{bryan2019g}}]
\label{cor:zloceta1}
The generating function $Z_{\Delta}(\tau)$ can be written as
\[ Z_{\Delta}(\tau)=\frac{\eta_{\Delta}(\tau)}{\eta^{n+1}(k\tau)} \]
where
		\begin{enumerate}
		\item \[\eta_{A_n}(\tau)= \frac{\eta^{n+1}{((n+1)\tau)}}{\eta(\tau)}\quad\textrm{ for } n\geq 1,\]
		\item
		\[\eta_{D_n}(\tau)= \frac{\eta^2(2\tau)\eta^{n+2}((4n-8)\tau)}{\eta(\tau)\eta(4\tau)\eta^2((2n-4)\tau)} \quad\textrm{ for } n \geq 4, \]
		\item \[\eta_{E_6}(\tau)= \frac{\eta^2(2\tau)\eta^{8}(24\tau)}{\eta(\tau)\eta^2(8\tau)\eta(12\tau)},\]
		\item \[\eta_{E_7}(\tau)= \frac{\eta^2(2\tau)\eta^{9}(48\tau)}{\eta(\tau)\eta(12\tau)\eta(16\tau)\eta(24\tau)},\]
		\item \[\eta_{E_8}(\tau)= \frac{\eta^2(2\tau)\eta^{10}(120\tau)}{\eta(\tau)\eta(24\tau)\eta(40\tau)\eta(60\tau)}.\]
	\end{enumerate}
\end{corollary}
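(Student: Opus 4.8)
The plan is to derive the corollary directly from Theorem~\ref{thm:ZDeltaeta} by the normalization that separates off the contribution of the minimal resolution. Since the asserted identity is $Z_{\Delta}(\tau)=\eta_{\Delta}(\tau)/\eta^{n+1}(k\tau)$, I would simply \emph{define} $\eta_{\Delta}(\tau):=Z_{\Delta}(\tau)\cdot\eta^{n+1}(k\tau)$ and then verify that, in each ADE case, this product collapses to the stated eta product. This is exactly the normalization forced on us by the Göttsche computation recalled in the introduction, namely $\eta(k\tau)^{n+1}=q^{(n+1)k/24}(Z_{\widetilde{\mathbb{C}^2/G_{\Delta}}}(q^k))^{-1}$; factoring out $\eta^{n+1}(k\tau)$ is precisely what isolates the ``rigid'' part of $Z_{\Delta}$, as anticipated by Theorem~\ref{thm0}.

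The first concrete step is to record the orders $k=|G_{\Delta}|$ of the relevant finite subgroups of $SL(2,\SC)$ under the McKay correspondence: $k=n+1$ for the cyclic group in type $A_n$, $k=4n-8$ for the binary dihedral group in type $D_n$, and $k=24,48,120$ for the binary tetrahedral, octahedral, and icosahedral groups in types $E_6,E_7,E_8$ respectively. The key structural observation is that in each of the non-$A$ cases the factor $\eta(k\tau)$ already occurs to the first power in the numerator of the expression for $Z_{\Delta}(\tau)$ given in Theorem~\ref{thm:ZDeltaeta}, while in type $A_n$ it is absent. Multiplying by $\eta^{n+1}(k\tau)$ therefore either introduces the factor $\eta^{n+1}((n+1)\tau)$ afresh (type $A$) or raises the power of the existing factor $\eta(k\tau)$ from $1$ to $n+2$ (types $D,E$), leaving all remaining eta factors untouched.

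Carrying this out case by case gives, for example, $\eta_{A_n}(\tau)=\eta^{n+1}((n+1)\tau)/\eta(\tau)$ and $\eta_{D_n}(\tau)=\eta^2(2\tau)\eta^{n+2}((4n-8)\tau)/(\eta(\tau)\eta(4\tau)\eta^2((2n-4)\tau))$, and similarly for the three exceptional types; these are precisely the products listed in the statement. As a consistency check I would compute the weight: each $Z_{\Delta}(\tau)$ in Theorem~\ref{thm:ZDeltaeta} has total exponent sum $-1$, hence weight $-1/2$, while $\eta^{n+1}(k\tau)$ contributes weight $(n+1)/2$, so $\eta_{\Delta}(\tau)$ has weight $n/2$, in agreement with Theorem~\ref{thm:maineta}.

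The truth is that this corollary carries no genuine obstacle: once $k=|G_{\Delta}|$ is correctly identified in each case, the verification is elementary exponent bookkeeping in the Dedekind eta function. The only point requiring a little care is confirming that the eta factor carrying the largest argument in the numerator of $Z_{\Delta}$ is indeed scaled by exactly $k\tau$ (rather than by some proper divisor of $k$), so that it merges cleanly with the multiplied-in $\eta^{n+1}(k\tau)$; this holds in every ADE case and is what makes the normalized expression an honest eta product.
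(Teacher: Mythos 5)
Your proposal is correct and coincides with how the paper treats this statement: the corollary is exactly Theorem~\ref{thm:ZDeltaeta} multiplied through by $\eta^{n+1}(k\tau)$, with $k=n+1,\,4n-8,\,24,\,48,\,120$ in types $A_n, D_n, E_6, E_7, E_8$ respectively, so that the existing factor $\eta(k\tau)$ in the numerator (absent in type $A$) has its exponent raised from $1$ to $n+2$. Your weight check and the observation that the largest eta argument in each numerator is exactly $k\tau$ are the right sanity checks, and the bookkeeping matches the listed products in all five cases.
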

As mentioned in Section~\ref{sec:intro}, this shows also that $R_{\Delta}(\tau)=\eta_{\Delta}(\tau)$.

\begin{example} For $\Delta=A_1$, $\eta_{A_1}(\tau)=\frac{\eta^2(2\tau)}{\eta(\tau)}$ which is a noncuspidal holomorphic modular form of weight $1/2$ and level 2 \cite[Example 3.12 (1)]{kohler2011eta}.\end{example}

As mentioned above, to investigate holomorphicity of $\eta_{\Delta}(\tau)$ for some $\Gamma_0(N)$ it is enough to check the orders at the cusps of the form $r=\frac{1}{c}$ where $c \vert N$. The level $N$ for each $\eta_{\Delta}(\tau)$ equals $k=|G_{\Delta}|$. 
Although we do a case-by-case analysis in a series of Lemmas, the calculations in each case are very similar. We always apply Lemma~\ref{lem:cusporder}~\eqref{it:cusporderdiv} on the eta quotients in Corollary~\ref{cor:zloceta1}.
\begin{lemma}
\label{lem:an}
The eta product $\eta_{A_n}(\tau)$ is holomorphic for $\Gamma_0(N)=\Gamma_0(n+1)$.
\end{lemma}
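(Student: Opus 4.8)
The plan is to read off the eta-product data of $\eta_{A_n}(\tau)$ from Corollary~\ref{cor:zloceta1} and feed it directly into the holomorphicity criterion. Writing $\eta_{A_n}(\tau) = \eta((n+1)\tau)^{n+1}/\eta(\tau)$ in the form \eqref{eq:etaproddef}, the nonzero exponents are $a_1 = -1$ and $a_{n+1} = n+1$, so the set of $m$ with $a_m \neq 0$ is $\{1, n+1\}$ and the level is $N = \mathrm{lcm}(1, n+1) = n+1$, as claimed in the statement.

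Next I would invoke the observation recorded after Lemma~\ref{lem:cusporder}: to decide holomorphicity for $\Gamma_0(n+1)$ it suffices to check the order of vanishing at the cusps $\frac{1}{c}$ as $c$ ranges over the positive divisors of $N = n+1$. Equivalently, by Köhler's criterion (the Proposition cited above), I need the inequalities $\sum_m \frac{(c,m)^2}{m} a_m \geq 0$ for every such $c$.

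The computation is then immediate. Fix a positive divisor $c \mid n+1$. Since $(c,1) = 1$ and, crucially, $(c, n+1) = c$ because $c$ divides $n+1$, Lemma~\ref{lem:cusporder}~\eqref{it:cusporderdiv} gives
\[\mathrm{ord}\left(\eta_{A_n}, \tfrac{1}{c}\right) = \frac{1}{24}\left(\frac{(c,1)^2}{1}\cdot(-1) + \frac{(c,n+1)^2}{n+1}\cdot(n+1)\right) = \frac{1}{24}\left(c^2 - 1\right).\]
This is nonnegative for every $c \geq 1$, so $\eta_{A_n}(\tau)$ is holomorphic at all cusps and hence a holomorphic modular form for $\Gamma_0(n+1)$.

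There is essentially no obstacle here: the argument is a single substitution into an already-established criterion. The only point deserving care is the identity $(c, n+1) = c$, which is exactly what makes the positive contribution of the factor $\eta((n+1)\tau)^{n+1}$ cancel the $-1$ coming from $\eta(\tau)$ up to the factor $c^2$; this is the restriction of $c$ to divisors of the level doing its work. I would also flag in passing that the inequality is strict precisely when $c > 1$, so $\eta_{A_n}(\tau)$ fails to be cuspidal only at the cusp $c = 1$, where the order is $0$ — consistent with the $A_1$ example above and worth recording for the later global analysis.
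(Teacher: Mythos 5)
Your proof is correct and follows essentially the same route as the paper: both apply Lemma~\ref{lem:cusporder}~\eqref{it:cusporderdiv} at the cusps $\frac{1}{c}$ for positive divisors $c \mid n+1$, use $(c,n+1)=c$ to obtain $\mathrm{ord}\bigl(\eta_{A_n},\frac{1}{c}\bigr)=\frac{1}{24}(c^2-1)$, and conclude nonnegativity. Your closing remark that the order is $0$ exactly at $c=1$ and positive otherwise is also what the paper records later in Corollary~\ref{cor:etaorder}.
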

\begin{proof}
The order of $\eta_{A_n}(\tau)$ at the cusp $\frac{1}{c}$ for a positive divisor $c \vert (n+1)$  is
\[ \mathrm{ord}\left(\eta_{A_n},\frac{1}{c}\right) =\frac{1}{24}\left( \frac{(c,n+1)^2}{n+1}(n+1) - 1\right)=\frac{1}{24}(c^2-1). \]
This number is nonnegative for all positive $c \vert (n+1)$.
\end{proof}
\begin{lemma}
	The eta product $\eta_{D_n}(\tau)$ is holomorphic for $\Gamma_0(N)=\Gamma_0(4n-8)$.
\end{lemma}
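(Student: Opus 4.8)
The plan is to follow the template set by the proof of Lemma~\ref{lem:an}: read off the eta product $\eta_{D_n}(\tau)$ from Corollary~\ref{cor:zloceta1}, compute its order at each cusp $\frac{1}{c}$ with $c\mid(4n-8)$ via Lemma~\ref{lem:cusporder}~\eqref{it:cusporderdiv}, and verify that every such order is nonnegative. From
$\eta_{D_n}(\tau)=\eta^2(2\tau)\eta^{n+2}((4n-8)\tau)\big/\bigl(\eta(\tau)\eta(4\tau)\eta^2((2n-4)\tau)\bigr)$
the nonzero exponents are $a_1=-1$, $a_2=2$, $a_4=-1$, $a_{2n-4}=-2$, $a_{4n-8}=n+2$. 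The first reduction I would make is that, since $c$ divides $4n-8$, one has $(c,4n-8)=c$; this is what produces the dominant positive contribution. Writing $M=n-2$ (so $2n-4=2M$, $4n-8=4M$), the order formula becomes
\[
\mathrm{ord}\left(\eta_{D_n},\frac{1}{c}\right)=\frac{1}{24}\left(-1+(c,2)^2-\frac{(c,4)^2}{4}-\frac{(c,2M)^2}{M}+\frac{(n+2)c^2}{4M}\right),
\]
and it suffices to show the bracketed quantity $B(c)$ is $\geq 0$ for every positive divisor $c$ of $4M$.

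The only genuinely variable term is $(c,2M)^2$, whose value depends on how the $2$-adic part of $c$ compares with that of $2M$. I would therefore split into two cases according to whether or not $c\mid 2M$, organizing everything through the valuation $v_2(c)$ relative to $v_2(M)$. In the first case $c\mid 2M$, so $(c,2M)=c$, and the two $c^2$-terms combine exactly to $\frac{c^2}{4}$, leaving $B(c)=-1+(c,2)^2-\frac{(c,4)^2}{4}+\frac{c^2}{4}$. A short subdivision by $v_2(c)\in\{0,1,\geq 2\}$ gives the values $\frac{c^2-1}{4}$, $2+\frac{c^2}{4}$, and $\frac{c^2}{4}-1$ respectively, each nonnegative on the relevant range of $c$ (and equal to $0$ only at $c=1$).

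In the second case $c\mid 4M$ but $c\nmid 2M$, a valuation count forces $v_2(c)=v_2(M)+2$, whence $4\mid c$ and $(c,2M)=c/2$. Substituting $(c,2)=2$ and $(c,4)=4$ collapses the bracket to
\[
B(c)=-1+\frac{(n+1)c^2}{4(n-2)},
\]
which, since $c\geq 4$, is bounded below by $-1+\frac{4(n+1)}{n-2}>0$ for all $n\geq 4$. Together the two cases establish $B(c)\geq 0$ throughout, proving holomorphicity for $\Gamma_0(4n-8)$.

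The main obstacle is entirely the bookkeeping of the greatest common divisors $(c,2)$, $(c,4)$, and $(c,2M)$; there is no conceptual difficulty once the identity $(c,4n-8)=c$ is in place and the cases are set up correctly, since each subcase then reduces to an elementary inequality. As a byproduct, the computation records that the order is strictly positive at every cusp except $c=1$, where it vanishes, matching the behaviour of the other ADE types.
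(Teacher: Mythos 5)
Your proof is correct, and at the decisive step it takes a genuinely different route from the paper --- one that turns out to be tighter. Both arguments share the same frame: read the exponents off Corollary~\ref{cor:zloceta1} and apply Lemma~\ref{lem:cusporder}~\eqref{it:cusporderdiv} at each positive divisor $c$ of $4n-8$. The paper then proceeds by term-by-term domination: it splits the positive term $\tfrac{(n+2)(c,4n-8)^2}{4(n-2)}$ into the pieces $\tfrac{n}{4}\cdot\tfrac{(c,4n-8)^2}{n-2}$ and $\tfrac{2}{4}\cdot\tfrac{(c,4n-8)^2}{n-2}$, claiming the first dominates $\tfrac{(c,2n-4)^2}{n-2}$, the second dominates $\tfrac{(c,4)^2}{4}$, and $(c,2)^2\geq 1$ absorbs the $-1$. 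You instead use $(c,4n-8)=c$ and do an exact case analysis on whether $c\mid 2n-4$, organized by $2$-adic valuation, reducing each case to a closed form that is manifestly nonnegative. This buys something real: the paper's second domination inequality $\tfrac{(c,4n-8)^2}{2(n-2)}\geq\tfrac{(c,4)^2}{4}$ does \emph{not} follow from $(c,4n-8)^2\geq(c,4)^2$ once $n>4$, and in fact it is false for every $n\geq 5$ at $c=4$ (left side $\tfrac{8}{n-2}$, right side $4$); the cusp order there is still nonnegative (e.g.\ for $n=10$, $c=4$ the bracket equals $3$) only because slack in the remaining terms covers the deficit --- which is exactly what your grouping of the terms captures and the paper's split does not. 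So your proof, besides being correct, actually repairs a gap in the paper's verification; as a further bonus, your computation records that the order vanishes precisely at $c=1$ and is strictly positive at every other cusp, which is the refinement needed later in Corollary~\ref{cor:etaorder}.
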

\begin{proof}
The order of $\eta_{D_n}(\tau)$ at the cusp $\frac{1}{c}$ for a positive divisor $c \vert 4n-8$  is
\[ 
\begin{aligned}
\mathrm{ord}\left(\eta_{D_n},\frac{1}{c}\right) & = \frac{1}{24}\left( \frac{(c,4n-8)^2(n+2)}{4n-8}+ (c,2)^2 - \frac{(c,4)^2}{4}  - \frac{(c,2n-4)^2}{n-2} -1 \right) \\
& = \frac{1}{24}\left( \frac{(c,4n-8)^2(n+2)}{4(n-2)}+ (c,2)^2 - \frac{(c,4)^2}{4}  - \frac{(c,2n-4)^2}{n-2} -1 \right). 
\end{aligned}
\]
As $(c,4n-8)^2 \geq (c,2n-4)^2$ and $n \geq 4$, we always have that
\[  \frac{(c,4n-8)^2}{(n-2)}\cdot \frac{n}{4} \geq \frac{(c,2n-4)^2}{(n-2)}. \]
We also have that $(c,4n-8)^2 \geq (c,4)^2$, which implies that
\[ \frac{(c,4n-8)^2}{(n-2)} \cdot \frac{2}{4} \geq \frac{(c,4)^2}{4}. \]
Finally, $(c,2)^2 \geq 1$. We obtain the statement.
\end{proof}
\begin{lemma}
	The eta product $\eta_{E_6}(\tau)$ is holomorphic for $\Gamma_0(N)=\Gamma_0(24)$.
\end{lemma}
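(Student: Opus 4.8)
The plan is to follow the same recipe used in the preceding lemmas: read off the eta-product exponents from the expression in Corollary~\ref{cor:zloceta1} and apply Lemma~\ref{lem:cusporder}~\eqref{it:cusporderdiv} at each relevant cusp. Writing $\eta_{E_6}(\tau)=\prod_m \eta(m\tau)^{a_m}$, the nonzero exponents are $a_1=-1$, $a_2=2$, $a_8=-2$, $a_{12}=-1$, $a_{24}=8$, and the level is indeed $\mathrm{lcm}(1,2,8,12,24)=24=k$. As a consistency check confirming that the exponents have been extracted correctly, the weight is $\tfrac12\sum_m a_m = 3 = n/2$, matching Theorem~\ref{thm:maineta}.

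By the observation following Lemma~\ref{lem:cusporder}, it suffices to verify nonnegativity of the order at the cusps $\tfrac1c$ for the positive divisors $c$ of $24$. For each such $c$ one computes
\[\mathrm{ord}\left(\eta_{E_6},\tfrac1c\right)=\frac{1}{24}\left(-1+(c,2)^2-\frac{(c,8)^2}{4}-\frac{(c,12)^2}{12}+\frac{(c,24)^2}{3}\right).\]
Unlike the infinite families $A_n$ and $D_n$, where a uniform inequality in $n$ does the job, here the level is the fixed integer $24$, so the cleanest route is a direct finite check over the eight divisors $c\in\{1,2,3,4,6,8,12,24\}$, evaluating the greatest common divisors $(c,m)$ and confirming each resulting order is $\geq 0$.

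The one subtlety — and the case I would single out — is $c=1$, where all the gcd's equal $1$ and the bracket collapses to $-1+1-\tfrac14-\tfrac{1}{12}+\tfrac13=0$; thus $\eta_{E_6}$ vanishes to order exactly zero at the cusp $1$, so the form is holomorphic but \emph{not} cuspidal, and one must confirm that no other divisor drives the order below zero. Every remaining divisor yields a strictly positive order (for instance $c=2$ gives $3/24$ and $c=24$ gives $167/24$), so there is no genuine obstruction beyond carrying out the tabulation carefully; the main point is simply to keep track of the gcd's $(c,8)$, $(c,12)$, and $(c,24)$, which are the terms that vary nontrivially across the divisors.
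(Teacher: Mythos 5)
Your proposal is correct: the exponents $a_1=-1$, $a_2=2$, $a_8=-2$, $a_{12}=-1$, $a_{24}=8$ are read off correctly from Corollary~\ref{cor:zloceta1}, your order formula agrees with the one obtained from Lemma~\ref{lem:cusporder}~\eqref{it:cusporderdiv}, and the values you quote ($0$ at $c=1$, $3/24$ at $c=2$, $167/24$ at $c=24$) match the paper's table in Appendix~\ref{sec:orderE}. Where you differ from the paper's proof is in how nonnegativity is established: you do an exhaustive check over the eight divisors of $24$, whereas the paper argues uniformly, splitting the exponent $8$ of the $\eta(24\tau)$ factor as $2+6$ and using the gcd monotonicity inequalities $(c,24)^2\geq(c,12)^2$ and $(c,24)^2\geq(c,8)^2$ together with $(c,2)^2\geq 1$ to dominate each negative term by a piece of the positive ones, exactly as in the type $D$ case. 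The paper's route has the advantage of being the same template that handles the infinite families $A_n$ and $D_n$ (where enumeration is impossible) and of requiring no arithmetic at individual cusps; your route has the advantage of producing the exact order at every cusp, which the paper in fact wants anyway — it records these values in Appendix~\ref{sec:orderE} and needs the sharper statement (order exactly $0$ at the cusp $1$, strictly positive elsewhere) for Corollary~\ref{cor:etaorder}. Indeed, the paper explicitly remarks that the tabulation constitutes ``another, computational proof'' in type $E$, so your argument coincides with that sanctioned alternative; your additional observation that the form is holomorphic but not cuspidal is likewise consistent with Corollary~\ref{cor:etaorder}.
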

\begin{proof}
The order of $\eta_{E_6}(\tau)$ at the cusp $\frac{1}{c}$ for a positive divisor $c \vert 24$  is
	\[ 
		\mathrm{ord}\left(\eta_{E_6},\frac{1}{c}\right) = \frac{1}{24}\left( \frac{8(c,24)^2}{24}+ (c,2)^2 - \frac{2(c,8)^2}{8}  - \frac{(c,12)^2}{12} -1 \right).
	\]
The proof is similar to the type D case.
The facts that $(c,24)^2 \geq (c,12)^2$ and $(c,24)^2 \geq (c,8)^2$  imply that
\[ \frac{2(c,24)^2}{24} \geq \frac{(c,12)^2}{12} \quad\textrm{and}\quad \frac{6(c,24)^2}{24} \geq \frac{(c,8)^2}{4}. \]
Additionally, $(c,2)^2 \geq 1$ as before.
\end{proof}

\begin{lemma}
	The eta product $\eta_{E_7}(\tau)$ is holomorphic for $\Gamma_0(N)=\Gamma_0(48)$.
\end{lemma}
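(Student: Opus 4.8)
The plan is to follow the template already established in the type $D_n$ and $E_6$ lemmas essentially verbatim, applying Lemma~\ref{lem:cusporder}~\eqref{it:cusporderdiv} to the eta quotient $\eta_{E_7}(\tau)= \frac{\eta^2(2\tau)\eta^{9}(48\tau)}{\eta(\tau)\eta(12\tau)\eta(16\tau)\eta(24\tau)}$, whose level is $\mathrm{lcm}(2,48,1,12,16,24)=48$. First I would substitute the six exponents $a_2=2$, $a_{48}=9$, $a_1=-1$, $a_{12}=a_{16}=a_{24}=-1$ into the formula $\frac{1}{24}\sum_m \frac{(c,m)^2}{m}a_m$ and simplify $\frac{9(c,48)^2}{48}=\frac{3(c,48)^2}{16}$, recording the order at the cusp $\frac{1}{c}$ for a positive divisor $c \vert 48$ as
\[ \mathrm{ord}\left(\eta_{E_7},\frac{1}{c}\right) = \frac{1}{24}\left( (c,2)^2 + \frac{3(c,48)^2}{16} - 1 - \frac{(c,12)^2}{12} - \frac{(c,16)^2}{16} - \frac{(c,24)^2}{24} \right). \]

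The heart of the argument is then to dominate the three negative fractional terms by the single large positive term $\frac{3(c,48)^2}{16}$. Here I would invoke the elementary monotonicity $(c,48)\geq (c,m)$ for every $m \vert 48$, so that $(c,48)^2 \geq (c,12)^2$, $(c,16)^2$, $(c,24)^2$. The key piece of bookkeeping is the identity
\[ \frac{3}{16} = \frac{1}{12} + \frac{1}{16} + \frac{1}{24}, \]
which I would verify by passing to the common denominator $48$, where the right side reads $\frac{4+3+2}{48}=\frac{9}{48}$. Pairing each summand with the matching negative term yields
\[ \frac{(c,48)^2}{12} \geq \frac{(c,12)^2}{12}, \quad \frac{(c,48)^2}{16} \geq \frac{(c,16)^2}{16}, \quad \frac{(c,48)^2}{24} \geq \frac{(c,24)^2}{24}, \]
so the whole fractional part is nonnegative. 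Finally $(c,2)^2 \geq 1$ absorbs the remaining $-1$, and I conclude that the order is nonnegative for every $c \vert 48$, which by the holomorphicity criterion quoted above (\cite[Corollary 2.3]{kohler2011eta}) is exactly the condition for $\eta_{E_7}(\tau)$ to be holomorphic on $\Gamma_0(48)$.

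I do not expect a genuine obstacle: the computation runs entirely parallel to the $D_n$ and $E_6$ cases, and the only nonroutine observation is that the coefficient $\frac{3}{16}$ available on $(c,48)^2$ is precisely large enough — and splits exactly — to cover $\frac{1}{12}+\frac{1}{16}+\frac{1}{24}$. The mild risk is an arithmetic slip in this split or in transcribing an exponent; should the exact split fail, the fallback is to evaluate $\mathrm{ord}\!\left(\eta_{E_7},\frac{1}{c}\right)$ directly at each of the ten divisors $c \in \{1,2,3,4,6,8,12,16,24,48\}$ of $48$, a finite check that settles holomorphicity unconditionally.
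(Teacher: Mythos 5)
Your proof is correct and follows essentially the same route as the paper: the same order formula at the cusps $\frac{1}{c}$, the same splitting $\frac{9}{48}=\frac{4}{48}+\frac{3}{48}+\frac{2}{48}$ (your $\frac{3}{16}=\frac{1}{12}+\frac{1}{16}+\frac{1}{24}$) paired with $(c,48)^2 \geq (c,12)^2, (c,16)^2, (c,24)^2$, and the same use of $(c,2)^2\geq 1$ to absorb the $-1$.
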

\begin{proof}
	The order of $\eta_{E_7}(\tau)$ at the cusp $\frac{1}{c}$ for a positive divisor $c \vert 48$  is
	\[ 
	\mathrm{ord}\left(\eta_{E_7},\frac{1}{c}\right) = \frac{1}{24}\left( \frac{9(c,48)^2}{48}+ (c,2)^2 - \frac{(c,12)^2}{12} - \frac{(c,16)^2}{16}  - \frac{(c,24)^2}{24} -1 \right).
	\]
	Due to $(c,48)^2 \geq (c,12)^2, (c,16)^2, (c,24)^2$  we have that
	\[ \frac{4(c,48)^2}{48} \geq \frac{(c,12)^2}{12} \quad\textrm{and}\quad \frac{3(c,48)^2}{48} \geq \frac{(c,16)^2}{16}\quad\textrm{and}\quad \frac{2(c,48)^2}{48} \geq \frac{(c,24)^2}{24}. \]
	Additionally, $(c,2)^2 \geq 1$ as before.
\end{proof}

\begin{lemma}
\label{lem:e8}
	The eta product $\eta_{E_8}(\tau)$ is holomorphic for $\Gamma_0(N)=\Gamma_0(120)$.
\end{lemma}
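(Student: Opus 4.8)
The plan is to follow verbatim the template of the preceding four lemmas, since $\eta_{E_8}(\tau)$ is again an eta product and the only task is to verify nonnegativity of its order at every relevant cusp. First I would read off from Corollary~\ref{cor:zloceta1} the exponent data of
\[\eta_{E_8}(\tau)= \frac{\eta^2(2\tau)\eta^{10}(120\tau)}{\eta(\tau)\eta(24\tau)\eta(40\tau)\eta(60\tau)},\]
namely $a_1=-1$, $a_2=2$, $a_{24}=a_{40}=a_{60}=-1$ and $a_{120}=10$, and confirm that the level is $N=\mathrm{lcm}(1,2,24,40,60,120)=120$, so that $\Gamma_0(120)$ is indeed the correct group.

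Next, by the reduction recorded after Lemma~\ref{lem:cusporder} it suffices to check the orders at the cusps $\frac{1}{c}$ with $c\mid 120$, $c>0$. Applying Lemma~\ref{lem:cusporder}~\eqref{it:cusporderdiv} gives
\[\mathrm{ord}\left(\eta_{E_8},\frac{1}{c}\right)=\frac{1}{24}\left(\frac{(c,120)^2}{12}+(c,2)^2-\frac{(c,24)^2}{24}-\frac{(c,40)^2}{40}-\frac{(c,60)^2}{60}-1\right),\]
and the goal becomes to show that the parenthesized expression is nonnegative for each divisor $c$ of $120$.

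The estimate mimics the type $D$, $E_6$ and $E_7$ cases. Since $24,40,60$ all divide $120$ and $c\mid 120$, we have $(c,120)^2\geq (c,24)^2,(c,40)^2,(c,60)^2$. Rewriting the leading term as $\frac{10(c,120)^2}{120}$ and the three negative terms with common denominator $120$, the key observation is the split $5+3+2=10$, which yields
\[\frac{5(c,120)^2}{120}\geq\frac{(c,24)^2}{24},\quad \frac{3(c,120)^2}{120}\geq\frac{(c,40)^2}{40},\quad \frac{2(c,120)^2}{120}\geq\frac{(c,60)^2}{60}.\]
These together absorb the entire coefficient $10$ of $\eta(120\tau)$, and combining them with $(c,2)^2\geq 1$ to cancel the $-1$ gives $\mathrm{ord}(\eta_{E_8},\frac{1}{c})\geq 0$, whence the lemma.

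There is no genuine obstacle here, only the bookkeeping of matching the coefficient $10$ by a partition that exactly balances the three negative gcd terms; finding the partition $5+3+2$ is the one step requiring a moment's thought. The only mild subtlety worth noting is that this split exhausts the coefficient $10$ completely, so the sole slack comes from $(c,2)^2-1$. Consequently $\eta_{E_8}$ is holomorphic but \emph{not} cuspidal, with order exactly $0$ at the odd cusps (for instance $c=1$), consistent with the $A_1$ example.
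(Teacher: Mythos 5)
Your core argument is correct and is essentially identical to the paper's proof: the same cusp-order formula from Lemma~\ref{lem:cusporder}~\eqref{it:cusporderdiv}, the same $5+3+2$ split of the coefficient $10$, and the same use of $(c,2)^2 \geq 1$ to absorb the $-1$. One caveat on your closing remark, which is not needed for the lemma but is factually wrong: it is not true that the order is exactly $0$ at all odd cusps, nor that the sole slack comes from $(c,2)^2-1$. For example at $c=3$ one computes $\mathrm{ord}\left(\eta_{E_8},\frac{1}{3}\right)=\frac{1}{120}>0$ (as recorded in the paper's Appendix~\ref{sec:orderE}); the slack there comes from the strict inequality $\frac{3(3,120)^2}{120}>\frac{(3,40)^2}{40}$, since $(3,40)=1<3=(3,120)$. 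Only the cusp $c=1$ (and cusps equivalent to it) has order exactly $0$, which is indeed enough to conclude non-cuspidality.
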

\begin{proof}
	The order of $\eta_{E_8}(\tau)$ at the cusp $\frac{1}{c}$ for a positive divisor $c \vert 120$  is
	\[ 
	\mathrm{ord}\left(\eta_{E_8},\frac{1}{c}\right) = \frac{1}{24}\left( \frac{10(c,120)^2}{120}+ (c,2)^2 - \frac{(c,24)^2}{24} - \frac{(c,40)^2}{40}  - \frac{(c,60)^2}{60} -1 \right).
	\]
	Due to $(c,120)^2 \geq (c,24)^2, (c,40)^2, (c,60)^2$  we have that
	\[ \frac{5(c,120)^2}{120} \geq \frac{(c,24)^2}{24} \quad\textrm{and}\quad \frac{3(c,120)^2}{120} \geq \frac{(c,40)^2}{40}\quad\textrm{and}\quad \frac{2(c,120)^2}{120} \geq \frac{(c,60)^2}{60}. \]
	Additionally, $(c,2)^2 \geq 1$ as before.
\end{proof}

All these prove the following, and give also the main part of Theorem~\ref{thm:maineta}.
\begin{corollary} 
\label{cor:etaorder}	
Let $\Delta$ be a root system of ADE type. Let $n$ be the rank of $\Delta$, and let $k=|G_{\Delta}|$. Then $\eta_{\Delta}$ is a holomorphic modular form of weight $\frac{n}{2}$ for $\Gamma_0(k)$ with order 0 at the cusp 1, with order $\frac{1}{24}((n+1)k-1)$ at $\infty$, and with a positive order at cusps corresponding to every other divisor of $k$.
\end{corollary}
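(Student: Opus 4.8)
The plan is to assemble the five preceding lemmas into the single statement, reading each clause off the explicit eta products of Corollary~\ref{cor:zloceta1} together with the order formula of Lemma~\ref{lem:cusporder}. First I would record two bookkeeping facts. The weight of an eta product $\prod_m\eta(m\tau)^{a_m}$ is $\tfrac12\sum_m a_m$, and summing the exponents in each of the five expressions of Corollary~\ref{cor:zloceta1} gives $\sum_m a_m=n$ in every case (for instance $n+1-1=n$ in type $A_n$, and $2+(n+2)-1-1-2=n$ in type $D_n$); hence the weight is $n/2$. Likewise, the least common multiple of the scalings $m$ with $a_m\neq 0$ equals $k=|G_\Delta|$ in each case, so the level is $k$ and $\Gamma_0(k)$ is the relevant group.

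For holomorphicity I would invoke Lemmas~\ref{lem:an}--\ref{lem:e8} directly: they establish $\mathrm{ord}(\eta_\Delta,\tfrac1c)\ge 0$ for every positive divisor $c\mid k$. By the reduction noted after Lemma~\ref{lem:cusporder} (the order at any cusp $-d/c$ depends only on $c$, so it suffices to test the cusps $\tfrac1c$ with $c\mid N$) and the criterion of \cite[Corollary~2.3]{kohler2011eta} recalled above, this is precisely the condition for $\eta_\Delta$ to be holomorphic at all cusps of $\Gamma_0(k)$. Combined with the weight and level computed above, this already yields that $\eta_\Delta$, and hence $R_\Delta=\eta_\Delta$, is a holomorphic modular form of weight $n/2$ for $\Gamma_0(k)$.

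It then remains to pin down the orders. The cusp $\infty$ is represented by the divisor $c=k$, since for $c=k$ one has $(c,m)=m$ for every $m\mid k$, so part (2) of Lemma~\ref{lem:cusporder} collapses to part (1); thus $\mathrm{ord}(\eta_\Delta,\infty)=\tfrac{1}{24}\sum_m m\,a_m$, and a one-line check in each type gives $\tfrac{1}{24}((n+1)k-1)$. Alternatively this is immediate from $R_\Delta=\eta_\Delta$ and the normalization $R_\Delta(q)=q^{((n+1)k-1)/24}\bigl(1+O(q)\bigr)$ coming from $R_{[\SC^2/G_\Delta]}(q)=1+\sum_{m\ge1}(\cdots)q^m$. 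For the cusp $1=\tfrac11$ we have $c=1$, whence $(c,m)=1$ throughout and $\mathrm{ord}(\eta_\Delta,1)=\tfrac{1}{24}\sum_m \tfrac{a_m}{m}$; evaluating this gives exactly $0$, the key point being the identity $\sum_m a_m/m=0$ which holds in each type. Finally, for a divisor with $1<c\mid k$ I would upgrade the inequalities of Lemmas~\ref{lem:an}--\ref{lem:e8} to strict ones: because $c\mid k$ and $c>1$ force $(c,k)=c>1$, the top term $\tfrac{(c,k)^2}{k}a_k$ (with $a_k>0$) contributes strictly more than at $c=1$, which breaks the exact cancellation responsible for order $0$ at the cusp $1$.

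The hard part will be the strict positivity at the intermediate cusps $\tfrac1c$ with $1<c<k$, and the genuine obstacle is the type $D_n$ family, whose orders carry the parameter $n$: one must verify that the comparisons $(c,4n-8)^2\ge(c,2n-4)^2$, $(c,4n-8)^2\ge(c,4)^2$ and $(c,2)^2\ge1$ combine to a \emph{strictly} positive total uniformly in $n\ge 4$, rather than merely a nonnegative one, which requires tracking the slack in these bounds carefully. By contrast, type $A_n$ is transparent (there $\mathrm{ord}(\eta_{A_n},\tfrac1c)=\tfrac{1}{24}(c^2-1)$, positive exactly when $c>1$), and $E_6,E_7,E_8$ reduce to finite checks over the divisors of $24,48,120$ respectively.
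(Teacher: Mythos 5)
Your plan coincides with the paper's own proof in most respects: the paper likewise reads the weight $n/2$ and the level $k$ off the exponent sums in Corollary~\ref{cor:zloceta1}, gets holomorphicity from Lemmas~\ref{lem:an}--\ref{lem:e8} via the cusp criterion, computes $\mathrm{ord}(\eta_\Delta,\infty)=\frac{1}{24}\sum_m m a_m=\frac{1}{24}((n+1)k-1)$, and gets order $0$ at the cusp $1$ from the identity $\sum_m a_m/m=0$ (a fact the paper reuses in Corollary~\ref{cor:multetaD}). Those parts of your write-up are correct, including the observation that the cusp $\infty$ is the cusp $\tfrac{1}{k}$, so that part (2) of Lemma~\ref{lem:cusporder} with $c=k$ collapses to part (1).

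The gap is in the strict positivity at the intermediate cusps $\tfrac1c$ with $1<c<k$, $c\mid k$. Your mechanism --- that the growth of the single positive top term $\frac{(c,k)^2}{k}a_k$ ``breaks the exact cancellation'' behind order $0$ at the cusp $1$ --- is not valid: the terms with $a_m<0$ also grow in magnitude whenever $(c,m)>1$ (in type $D_n$ the negative contributions $-\frac{(c,4)^2}{4}$ and $-\frac{(c,2n-4)^2}{n-2}$ can each increase substantially), so the sign of $\sum_m\frac{(c,m)^2-1}{m}a_m$ is not determined by the top term alone. You then acknowledge this by deferring the ``hard part'' in type $D_n$, but that deferred verification is exactly the content of the clause being proved, so the proposal as written does not establish it. It can be completed cleanly: if $c\mid 4n-8$ is odd, then $c\mid n-2$, hence $(c,2)=(c,4)=1$ and $(c,2n-4)=(c,4n-8)=c$, and the order collapses to $\frac{1}{24}\cdot\frac{c^2-1}{4}>0$; if $c$ is even, then writing $g=(c,4n-8)$ and using $(c,2n-4)\mid g$, $(c,4)\mid g$ and $(c,2)^2=4$, one gets
\[
24\,\mathrm{ord}\left(\eta_{D_n},\tfrac1c\right)\;\ge\;\frac{g^2-(c,4)^2}{4}+\left((c,2)^2-1\right)\;\ge\;3,
\]
while type $A_n$ is your explicit formula $\frac{c^2-1}{24}$ and $E_6,E_7,E_8$ are finite checks (tabulated in Appendix~\ref{sec:orderE}). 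To be fair, the paper's own proof is equally laconic here (``the statement on the orders can be checked from the formulas''), but it does not assert an incorrect mechanism; your proposal needs estimates like the displayed one to be complete.
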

\begin{proof}
The statement on the orders can be checked from the formulas in Lemma~\ref{lem:an}--\ref{lem:e8}.
For example, in Type A, $(c^2-1)$ is zero when $c=1$, and positive otherwise. The order at $\infty$ is $\frac{1}{24}\sum_m ma_m$, which is $\frac{1}{24}((n+1)k-1)$ in each case.
\end{proof}

\begin{remark}
We collected the orders at the various cusps for the type E cases in Appendix~\ref{sec:orderE}. As all entries are non-negative, this gives another, computational proof of the statement in type E.
\end{remark}


The multiplier system of $\eta_{\Delta}$ can also be obtained quickly.
\begin{corollary} 
\label{cor:multetaD}	
The multiplier system $\chi_{\Delta}(A)$ of $\eta_{\Delta}$ is
	
\[ \chi_{\Delta}(A)=\begin{cases}
		(\eta_{\Delta})^\ast(A)e\left(\frac{1}{24}\left(bd((n+1)k-1)\right)\right), & \textrm{if }c\textrm{ is odd,}
	\\
		(\eta_{\Delta})_\ast(A)e\left(\frac{1}{24}\left(bd((n+1)k-1)
		+3(d-1)n\right)\right), & \textrm{if }c\textrm{ is even.}
\end{cases}
\]
\end{corollary}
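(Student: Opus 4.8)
The plan is to specialize the general multiplier formula of Corollary~\ref{cor:fmultexp} to the eta product $f=\eta_\Delta$, since all the global data it requires have already been computed elsewhere in the section. The formula in Corollary~\ref{cor:fmultexp} depends on the exponents $a_m$ of $\eta_\Delta=\prod_m \eta(m\tau)^{a_m}$ only through the three sums $\sum_m \frac{a_m}{m}$, $\sum_m m a_m$, and $\sum_m a_m$. Hence the whole task reduces to evaluating these three quantities and substituting them into the two branches ($c$ odd and $c$ even).

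First I would read off $\sum_m a_m$ and $\sum_m m a_m$ from facts already in hand. Since $\eta_\Delta$ transforms with weight $\tfrac{1}{2}\sum_m a_m$ and this weight equals $\tfrac{n}{2}$ by Corollary~\ref{cor:etaorder}, we get $\sum_m a_m = n$. Likewise, by the first part of Lemma~\ref{lem:cusporder} the order of $\eta_\Delta$ at $\infty$ is $\tfrac{1}{24}\sum_m m a_m$, and this order equals $\tfrac{1}{24}((n+1)k-1)$ by Corollary~\ref{cor:etaorder}; hence $\sum_m m a_m = (n+1)k-1$.

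The key simplification is the vanishing of the remaining sum. By Lemma~\ref{lem:cusporder}~\eqref{it:cusporderdiv} the order at the cusp $1=\tfrac{1}{1}$ (that is, $c=1$) equals $\tfrac{1}{24}\sum_m \frac{(1,m)^2}{m}a_m = \tfrac{1}{24}\sum_m \frac{a_m}{m}$, because $(1,m)=1$ for every $m$. By Corollary~\ref{cor:etaorder} this order is $0$, so $\sum_m \frac{a_m}{m}=0$. This single observation is what makes the final expression clean: in both branches of Corollary~\ref{cor:fmultexp} every term carrying the factor $\sum_m \frac{a_m}{m}$ is annihilated.

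Finally I would substitute. In the odd-$c$ branch the prefactor $(\eta_\Delta)^\ast(A)$ is by definition $(f)^\ast(A)$, and once the $\sum_m \frac{a_m}{m}$ term is killed the exponential collapses to $e\!\left(\tfrac{1}{24}bd((n+1)k-1)\right)$. In the even-$c$ branch the prefactor is $(\eta_\Delta)_\ast(A)$, and after discarding the $\sum_m \frac{a_m}{m}$ term and inserting $\sum_m a_m = n$, the exponent becomes $\tfrac{1}{24}\big(bd((n+1)k-1)+3(d-1)n\big)$. These are exactly the two displayed formulas for $\chi_\Delta(A)$. I do not expect any genuine obstacle: the only point requiring care is recognizing that the three structural sums are already encoded in the weight and in the orders at the two distinguished cusps $\infty$ and $1$ recorded in Corollary~\ref{cor:etaorder}, so that the result follows by direct substitution uniformly in $\Delta$, with no case-by-case analysis across the ADE types.
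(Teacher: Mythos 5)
Your proposal is correct and takes essentially the same approach as the paper: both proofs reduce to evaluating the three sums $\sum_m a_m=n$, $\sum_m ma_m=(n+1)k-1$, and $\sum_m \frac{a_m}{m}=0$, and then substituting them into Corollary~\ref{cor:fmultexp}. The only cosmetic difference is that you read these quantities off from the weight and the cusp orders recorded in Corollary~\ref{cor:etaorder}, whereas the paper observes them directly from the explicit eta products of Corollary~\ref{cor:zloceta1} (while noting, as you do, that $\sum_m \frac{a_m}{m}$ is the order at the cusp $1$).
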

\begin{proof}
We observe that for each $\eta_{\Delta}$ in Corollary~\ref{cor:zloceta1}, $\sum a_m=n$ and $\sum_m \frac{a_m}{m}=0$, the order of the cusp 1. Moreover, $\sum_m ma_m=(n+1)k-1$. Combining these facts with Corollary~\ref{cor:fmultexp} we obtain the statement.	
\end{proof}

\clearpage
\appendix

\section{The orders of $R_{\Delta}(\tau)$ in type E}
\label{sec:orderE}

\begin{center}
	\bgroup
	\def\arraystretch{1.5}
	\begin{tabular}{|c| c|| c| c|| c| c|}
		\hline
		\multicolumn{2}{|c||}{$E_6$} & \multicolumn{2}{c||}{$E_7$}  & \multicolumn{2}{c|}{$E_8$}  \\
		\hline
		$c$ & $\mathrm{ord}$ & 	$c$ & $\mathrm{ord}$ & 	$c$ & $\mathrm{ord}$ \\
		\hline
		$1$ & $0$ & $1$ & $0$& $1$ & $0$ \\
		\hline
		$2$ & $\frac{1}{8}$ & $2$ & $\frac{1}{8}$& $2$& $\frac{1}{8}$\\
		\hline
		$3$ & $\frac{1}{12}$ & $3$&$\frac{1}{48}$ & $3$ & $\frac{1}{120}$ \\
		\hline
		$4$ & $\frac{1}{8}$ & $4$&$\frac{1}{8}$ & $4$ & $\frac{1}{8}$ \\
		\hline
		$6$ & $\frac{11}{24}$ & $6$&$\frac{5}{24}$ & $5$ & $\frac{1}{24}$ \\
		\hline
		$8$ & $\frac{7}{24}$ & $8$&$\frac{7}{24}$ & $6$ & $\frac{19}{120}$ \\
		\hline
		$12$ & $\frac{35}{24}$ & $12$&$\frac{11}{24}$ & $8$ & $\frac{19}{120}$ \\
		\hline
		$24$ & $\frac{167}{24}$ & $16$&$\frac{31}{24}$ & $10$ & $\frac{7}{24}$ \\
		\hline
		& & $24$&$\frac{71}{24}$ & $12$ & $\frac{31}{120}$ \\
		\hline
		& & $48$&$\frac{383}{24}$ & $15$ & $\frac{7}{12}$\\
		\hline
		& & & & $20$ & $\frac{19}{24}$ \\
		\hline
		& & & & $24$ & $\frac{23}{24}$ \\
		\hline
		& & & & $30$ & $\frac{59}{24}$ \\
		\hline
		& & & & $40$ & $\frac{89}{24}$ \\
		\hline
		& & & & $60$ & $\frac{227}{24}$ \\
		\hline
		& & & & $120$ & $\frac{1079}{24}$ \\
		\hline
		
	\end{tabular}
	\egroup
\end{center}

\bibliographystyle{amsplain}
\bibliography{rigid}

\end{document}